\pgfplotsset{width=7cm,compat=1.9}
\newtheorem{thm}[equation]{Theorem}
\newtheorem{lem}[equation]{Lemma}
\newtheorem{prop}[equation]{Proposition}
\theoremstyle{definition}
\numberwithin{equation}{section}
\def\P{{\mathbb{P}}}
\def\TT{{\mathcal{T}}}
\def\T{{\mathbb{T}}}
\def\R{{\mathbb{R}}}
\def\E{{\mathbb{E}}}
\def\S{{\mathbb{S}}}
\def\X{{\mathbb{X}}}
\def\M{{\mathbb{M}}}
\def\8{\infty}
\renewcommand{\a}{\alpha}
\renewcommand{\d}{\delta}
\newcommand{\g}{\gamma}
\newcommand{\eps}{\varepsilon}
\newcommand{\wt}{\widetilde}
\newcommand{\ov}{\overline}
\renewcommand{\le}{\leqslant}\renewcommand{\leq}{\leqslant}
\renewcommand{\ge}{\geqslant}\renewcommand{\geq}{\geqslant}
\renewcommand{\setminus}{\smallsetminus}
\begin{document}

\title{ Large deviation estimates for branching random walks}
\author[D. Buraczewski, M. Ma\'slanka]
{Dariusz Buraczewski, Mariusz Ma\'slanka}
\address{D. Buraczewski, M. Ma\'slanka\\ Instytut Matematyczny\\ Uniwersytet Wroclawski\\ 50-384 Wroclaw\\
pl. Grunwaldzki 2/4\\ Poland}
\email{dbura@math.uni.wroc.pl\\ maslanka@math.uni.wroc.pl}

\thanks{The  research was partially supported by the National Science Center, Poland (Sonata Bis, grant number DEC-2014/14/E/ST1/00588)}

\date{\today}

\keywords{branching random walk; random walk; large deviations; first passage time.}
\subjclass[2010]{Primary 60F10, secondary 60G50, 60J80}

\maketitle
\begin{abstract}
We consider the branching random walk drifting  to $-\infty$ and we 
investigate large deviations-type estimates for the first passage
time. We prove the corresponding law of large numbers and the central limit theorem.
\end{abstract}

\maketitle

\section{Introduction}
\subsection{Branching random walk}
We consider a discrete-time one-dimensional branching random walk. An initial particle is located at the origin. At time 1 it gives birth to $N$ new particles, and then dies. Each of the particles is positioned randomly on the real line according to the distribution of the point process $\mathcal{L}$.
Next, at time 2 all the individuals produce independently their own children and die. All the particles in the second generation are located according to the same point process, with respect to the positions of their parents. This procedure
perpetuates itself. The resulting system is called a branching random walk. It can be represented as an infinite tree $\mathcal{T} =\bigcup_{k \geq 0} \{1,2,...,N \}^k$, where $o=\emptyset$ denotes the initial ancestor and  word $\g$ of length $n$ corresponds to individuals in the $n$th generation. With every node $\g$ one can associate a real random variable $X_\g$ representing its displacement according to the parent. Then, the position of $\g$ is given by $S_\g$, the sum of all weights on the path from $o$ to $\g$. The collection of positions $\{S_\g\}_{\g\in{\mathcal T}}$ forms the branching random walk.

In this paper we are interested in behavior of
\begin{equation*}
M_n = \max_{|\g|=n} S_\g,
\end{equation*}
 the maximal position of the branching random walk after $n$ steps. (Usually one considers the minimal position, however it is sufficient to replace the point process $\mathcal{L}$, by $-\mathcal{L}$.) Its properties are coded in the Laplace transform of the point process $\mathcal{L}$
$$
\psi(s) = \E\bigg[  \sum_{|\g|=1} e^{s X_\g}  \bigg].
$$ The starting point of our considerations is the law of large numbers proved in a sequence of papers by Hammersley \cite{hammersley}, Kingman \cite{kingman} and Biggins \cite{biggins},
$$
\lim_{n\to\8} \frac {M_n}n = \rho^* \qquad \mbox{a.s.}
$$ for $\rho^* = \inf_{s>0} \frac{\log \psi(s)}{s}$. Precise large deviations  were proved by Rouault \cite{R}. Last years, in a number of papers, the problem of fluctuations, i.e. behavior of $M_n - n\rho^*$, was considered. Addario-Berry, Reed \cite{addario} and Hu, Shi \cite{Hu:Shi} exhibited a logarithmic correction and next A\"id\'ekon \cite{A} proved that the normalized fluctuations converge in law to some random shift of a Gumbel variable. We refer the reader to a recent book by Shi \cite{Shi} for an overview of the related results.

\medskip

In this paper  we consider the  case when  the branching random walk drifts to $-\8$, i.e.
 $\rho^* <0$ and $\psi(s)<1$ for some $s>0$. Some of the trajectories can still exceed a large level $u$ and apart from knowing the probability of large deviations, we  point out the moment when it arises. More precisely we consider the first
passage time
\begin{equation}\label{eq: tu}
  \tau_u = \inf_n \{ M_n > u\}.
\end{equation}
Then, conditioning on the event $\{\tau_u<\8\}$ we prove limit theorems related to $\tau_u$: the law of large numbers, the central limit theorem and large deviations. All these results are formulated as Theorems \ref{thm: tuclt} and \ref{thm: tu-ld}.

\subsection{Main results}
Let $\mathcal{T} =\bigcup_{k \geq 0} \{1,2,...,N \}^k$ be an infinite tree, where $\{1,2,...,N\}^0 = \{ \varnothing \}$. For $\g = (i_1, . . . , i_n)\in \mathcal{T}$ we denote the length of $\g$ by $|\g| = n$, by $\g i$ we denote the vertex $(i_1, i_2, . . . , i_n, i)$ and we put $\g|_k = (i_1,\ldots,i_k)$ for $k\le |\g|$. Thus, every node $\g\in \mathcal{T}$ has $N$ children of the form $\g i$.
With tree $\mathcal{T}$ we associate an i.i.d sequence $\{X_\g\}_{\g \in \mathcal{T}}$ with the law of some generic random variable $X$. We assume that the distribution of $X$ is non-lattice, i.e. it
is not supported on any of the sets $a\mathbb{Z}+b$, $a > 0$. For any $\g = (i_1, . . . , i_n) \in \mathcal{T}$ define $$S_\g = X_{(i_1)} + X_{(i_1,i_2)} + ... + X_{(i_1,i_2,...,i_n)}  = X_{\g|_1}+\cdots + X_{\g|_n},$$ then $S_\g$ denotes the position of the particle represented by $\gamma$.

In this paper we are interested in behavior of $M_n = \max_{ |\g|=n} S_{\g}$, i.e. maximal position of the branching random walk after $n$ steps. Its properties are determined by the moment and cumulant generating functions
\begin{align*}
 \psi(s) & = \E\left[\sum_{i=1}^{N} e^{s X_i} \right] = N \E\left[e^{s X_1} \right] =: N \lambda(s),\\
 \Psi(s) &= \log \psi(s), \Lambda(s) = \log \lambda(s).
\end{align*}
We define two parameters
$$\a_\8 = \sup\{\a:\; \Psi(\a)<\8\},\qquad  \rho_\8 = \sup_{\a<\a_\8}\{\Psi'(\a)\}.
$$
 Our rate function is just the convex conjugate (the Fenchel-Legendre transform) of $\Psi$ and is defined by
$$
\Psi^*(x) = \sup_{s\in \R}\{sx - \Psi(s)\}, \quad x\in\R.
$$ Its various properties can be found in Dembo, Zeitouni \cite{DZ}. We will often use that given $\a<\a_\8$ and $\rho=\Psi'(\a)$
\begin{equation}\label{eq:ren24}
\Psi^*(\rho) = \rho \a - \Psi(\a).
\end{equation}
We assume that $\rho^*<0$ and we study
behavior of the first passage time $\tau_u$ defined in \eqref{eq: tu}. It is known (see Jelenkovic, Olvera-Cravioto \cite{JO}) that if $\psi(\a_0)=1$ for some $\a_0\in(0,\a_\8)$, then
\begin{equation}\label{eq: jo}
\P\big[\tau_u <\8\big] = \P\big[ \sup_{\g\in\mathcal{T}} S_\g > u \big]\sim c_+ e^{-\a_0 u}.
\end{equation}
Here we obtain limit theorems for $\tau_u$ conditioned on $\{\tau_u < \8\}$.
\begin{thm}[Law of large numbers and Central limit theorem]\label{thm: tuclt} If  $\psi(\a_0)=1$ for some $\a_0 \in (0,\a_\8)$
 and the distribution of $X$ is non-lattice, then
$$\frac{\tau_u}{u}\; \bigg| \tau_u <\8 \stackrel{\P}{\longrightarrow} \frac 1{\rho_0}
$$
and
$$\frac{\tau_u - u/\rho_0}{ \sigma_0 \rho_0^{-3/2} \sqrt{  u}}\; \bigg| \tau_u <\8 \stackrel{d}{\longrightarrow} N(0,1),
$$
where
$\rho_0 = \Psi'(\a_0)$, $\sigma_0 = \Psi''(\a_0)$.\footnote{The symbol  $\stackrel{\P}\longrightarrow$ (resp. $\stackrel{d}\longrightarrow$) denotes convergence in probability (resp. in ditribution).}
\end{thm}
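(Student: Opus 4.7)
My plan is to reduce the conditional behaviour of $\tau_u$ to the first-passage behaviour of an auxiliary single random walk obtained by exponential tilting at $\a_0$, and then invoke the classical renewal LLN/CLT for that walk. Let $(Y_k)_{k\ge 1}$ be i.i.d.\ with the tilted law $\P_{\a_0}(dx) = N\,e^{\a_0 x}\,\P(X\in dx)$, which is a probability measure by $\psi(\a_0) = N\lambda(\a_0) = 1$. Set $T_k = Y_1+\cdots+Y_k$ and $\sigma_u = \inf\{k\ge 1 : T_k > u\}$. Under $\P_{\a_0}$ the step $Y_1$ has mean $\Lambda'(\a_0) = \rho_0 > 0$ and variance $\Psi''(\a_0)$, so the non-lattice assumption on $X$ together with the classical renewal LLN and CLT yield
\begin{equation}\label{eq:plan-rwclt}
\frac{\sigma_u}{u}\;\xrightarrow[u\to\8]{\P_{\a_0}}\;\frac{1}{\rho_0},\qquad \frac{\sigma_u-u/\rho_0}{\sigma_0\,\rho_0^{-3/2}\sqrt{u}}\;\xrightarrow[u\to\8]{d}\;N(0,1),
\end{equation}
precisely the limiting statement I aim to transfer to $\tau_u$ conditionally on $\{\tau_u<\8\}$.

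\noindent\textbf{Key estimate.} The crucial step is the asymptotic
\begin{equation}\label{eq:plan-main}
\P[\tau_u\le n]\;=\;c_+\,e^{-\a_0 u}\,\P_{\a_0}[\sigma_u\le n]\,\bigl(1+o(1)\bigr),
\end{equation}
uniform in $n$ in a window of width $O(\sqrt u)$ around $u/\rho_0$. The upper bound follows from a union bound over particles combined with the many-to-one identity $N^k\,\E[f(X_{\g|_1},\dots,X_{\g|_k})] = \E_{\a_0}\bigl[e^{-\a_0 T_k}f(Y_1,\dots,Y_k)\bigr]$: writing $T_k^{(0)}$ for the walk with the original step law,
$$\P[\tau_u\le n]\;\le\;\sum_{k=1}^n N^k\,\P\bigl[T_k^{(0)}>u,\;T_j^{(0)}\le u\ \forall j<k\bigr]\;=\;e^{-\a_0 u}\,\E_{\a_0}\bigl[e^{-\a_0(T_{\sigma_u}-u)};\,\sigma_u\le n\bigr].$$
Since the overshoot $T_{\sigma_u}-u$ converges in law to a limit $R_\8$ (by the non-lattice renewal theorem) and is asymptotically independent of $\sigma_u$ on the $\sqrt u$ scale, the right-hand side equals $e^{-\a_0 u}\bigl(C\,\P_{\a_0}[\sigma_u\le n] + o(1)\bigr)$ with $C = \E_{\a_0}[e^{-\a_0 R_\8}]$. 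Sending $n\to\8$ and comparing with \eqref{eq: jo} forces $C = c_+$, so the upper bound is asymptotically sharp; applying the same bound to the complementary event $\{n<\tau_u<\8\}$ and subtracting from $\P[\tau_u<\8]$ delivers the matching lower bound, establishing \eqref{eq:plan-main}.

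\noindent\textbf{Conclusion and main obstacle.} Dividing \eqref{eq:plan-main} by \eqref{eq: jo} gives
$$\P\bigl[\tau_u\le n\,\big|\,\tau_u<\8\bigr]\;-\;\P_{\a_0}[\sigma_u\le n]\;\longrightarrow\;0,$$
so substituting $n = \lfloor u/\rho_0 + y\,\sigma_0\rho_0^{-3/2}\sqrt u\rfloor$ and using \eqref{eq:plan-rwclt} yields the claimed CLT, while taking $n = \lfloor u/\rho_0 \pm \eps u\rfloor$ produces the LLN. I expect the main obstacle to lie in the joint asymptotic decoupling used in the key estimate, namely that the overshoot $T_{\sigma_u}-u$ and the standardised passage time $(\sigma_u-u/\rho_0)/\sqrt u$ become independent in the limit under $\P_{\a_0}$. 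This is a standard but non-trivial renewal-theoretic fact (obtainable via Stone's decomposition or by coupling to the stationary renewal process) and is exactly what permits the expectation to factor cleanly and the constant $C$ to be identified with $c_+$ via the branching random walk's own tail estimate \eqref{eq: jo}.
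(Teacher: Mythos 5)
There is a genuine gap, and it sits exactly where your argument claims to be finished rather than where you flag the "main obstacle". The tilting identity and the resulting upper bound
$\P[\tau_u\le n]\le\sum_{k\le n}N^k\,\P[\ov\tau_u=k]=e^{-\a_0 u}\,\E_{\a_0}\bigl[e^{-\a_0(T_{\sigma_u}-u)};\,\sigma_u\le n\bigr]$
are correct, but sending $n\to\8$ and comparing with \eqref{eq: jo} does \emph{not} force $C=\E_{\a_0}[e^{-\a_0R_\8}]$ to equal $c_+$; it only gives $c_+\le C$. The sum $\sum_{k}\sum_{|\g|=k}\P[C_\g]$ is the \emph{expected number} of particles whose ancestral path first crosses $u$ at that particle, whereas $\P[\tau_u<\8]$ is the probability that at least one such particle exists. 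On the event $\{\tau_u<\8\}$ the number of crossing particles is typically strictly larger than one (once one path approaches level $u$ its descendants multiply and many of them cross), so in general $C>c_+$ strictly. Consequently your sandwich reads
$c_+-C\,\P_{\a_0}[\sigma_u>n]+o(1)\le e^{\a_0 u}\,\P[\tau_u\le n]\le C\,\P_{\a_0}[\sigma_u\le n]+o(1)$,
and the two sides differ by the constant $C-c_+$, which is of the same order as the quantities you are trying to pin down. The CLT (and even the LLN upper tail) cannot be extracted from this.

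What is missing is precisely the hard part of the paper's argument: a proof that the overcounting factor (the conditional expected number of first‑crossing particles) is asymptotically the \emph{same} constant for every $n$ in the $O(\sqrt u)$ window, so that it cancels upon dividing by $\P[\tau_u<\8]$. The paper achieves this by a local limit theorem, $\P[\tau_{n\rho_0}=n+j_n]\sim C\,n^{-1/2}e^{-\a_0\rho_0 n}e^{-\rho_0^2j_n^2/(2\sigma_0^2 n)}$ uniformly in $|j_n|\le b\sqrt{n\log n}$ (Proposition \ref{prop: cz1}), whose proof controls the correlations on the tree via inclusion–exclusion over sparse subsets and a block decomposition near the top $L$ generations (Lemmas \ref{prop:2} and \ref{prop:1}); the CLT then follows by Riemann summation and the LLN from the complementary tail bound of Lemma \ref{lem: pt6} together with \eqref{eq: jo}. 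Your single-walk reduction does correctly predict the centering $u/\rho_0$ and the scaling $\sigma_0\rho_0^{-3/2}\sqrt u$, and the overshoot/passage-time decoupling you worry about is indeed standard; but without a second-moment (decorrelation) argument on the tree the first-moment bound alone cannot close the proof.
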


\begin{thm}[Large deviations]
\label{thm: tu-ld}
Assume that the distribution of $X$ is non-lattice,   $\rho\in (0,\rho_\8)$   and let $\a$ satisfies $\Psi'(\a)=\rho$.

If $\psi(\a)>1$ (in particular $\rho>\rho_0$), then
\begin{equation}\label{eq: ren41}
\P\bigg[  \frac{\tau_u}{u} < \frac 1{\rho} \bigg] \sim \frac{C_1 \psi(\a)^{-\Theta(u)}}{\sqrt u   }\; {e^{-\frac{\Psi^*(\rho)}{\rho} u}},
\end{equation} for $\Theta(u) = u/\rho - \lfloor u/\rho\rfloor$.

If $\psi(\a)<1$ (in particular $\rho<\rho_0$ if $\rho_0$ is well defined), then
\begin{equation}\label{eq: ren42}
\P\bigg[  \frac{\tau_u}{u} > \frac 1{\rho} \bigg] \sim \frac{C_2 \psi(\a)^{-\Theta(u)}}{\sqrt u   }\; {e^{-\frac{\Psi^*(\rho)}{\rho} u}}.
\end{equation}
Moreover
\begin{equation}\label{eq: ren43}
\P\bigg[ \tau_u =  \bigg\lfloor \frac u{\rho}\bigg\rfloor \bigg] \sim \frac{C_3 \psi(\a)^{-\Theta(u)}}{\sqrt u   } \;{e^{-\frac{\Psi^*(\rho)}{\rho} u}}.
\end{equation}

\end{thm}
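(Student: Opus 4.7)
My strategy is to establish a single sharp asymptotic for $\P[\tau_u = n]$, valid uniformly for $n$ within $O(\sqrt{u})$ of the optimal generation $n_\rho(u) := \lfloor u/\rho\rfloor$, and then to deduce \eqref{eq: ren41}--\eqref{eq: ren43} by localising at or summing over the appropriate range of $n$. The basic tool is the many-to-one identity
\begin{equation*}
\E\bigl[\#\{|\gamma|=n : S_\gamma > u\}\bigr] = N^{n}\,\P[S_n > u],
\end{equation*}
where $S_n = X_1 + \cdots + X_n$ is a classical random walk with step law that of $X$, combined with the non-lattice Bahadur--Rao sharp large deviation estimate for $\P[S_n > u]$. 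Tilting at the parameter $\alpha$ with $\Lambda'(\alpha)=\rho$ and applying the non-lattice local CLT for the tilted walk, together with the identities $\Psi^*(\rho) = \Lambda^*(\rho) - \log N$ and $\Psi^*(\rho) = \alpha\rho - \Psi(\alpha)$, reorganises the resulting asymptotic into
\begin{equation*}
N^{n}\,\P[S_n > u] \;\sim\; \frac{C}{\sqrt u}\,\psi(\alpha)^{-\Theta(u)-k}\,e^{-\Psi^*(\rho)u/\rho}, \qquad n = n_\rho(u)-k,
\end{equation*}
uniformly for $|k|\le u^{1/2-\delta}$ (the local-CLT correction is $1+o(1)$ in this window).

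\textbf{Matching upper and lower bounds on $\P[\tau_u=n]$.} The upper bound $\P[\tau_u = n] \le N^{n}\P[S_n > u]$ is immediate from $\{\tau_u=n\}\subset\{M_n>u\}$ and Markov. For the matching lower bound, which is the crux, I propose to use the additive martingale $W_n(\alpha) = \sum_{|\gamma|=n} e^{\alpha S_\gamma - n\Psi(\alpha)}$ and its associated spine / size-biased change of measure. Under the tilted measure $\wt\P$ defined by $d\wt\P/d\P|_{\mathcal F_n} = W_n(\alpha)$, a distinguished spine particle performs an $\alpha$-tilted random walk $\wt S_n$ with positive drift $\rho$, and the first-passage question for $\wt S_n$ above $u$ reduces to a one-dimensional problem handled by classical sharp renewal / first-passage asymptotics in the spirit of Heyde, Siegmund and the non-lattice Feller renewal theorem. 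Pulling the spine estimate back through $W_n(\alpha)$ and controlling, via a second-moment / truncation argument, the event that some non-spine cousin line of descent has already reached $u$ at an earlier generation then yields the matching lower bound.

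\textbf{Summation yielding the three statements.} Once the sharp formula for $\P[\tau_u = n]$ is in hand, the three conclusions follow by straightforward summation. For \eqref{eq: ren43} we take $k=0$ directly. For \eqref{eq: ren41} in the regime $\psi(\alpha)>1$ we sum over $k\ge 0$: the factor $\psi(\alpha)^{-k}$ yields a convergent geometric series with ratio $\psi(\alpha)^{-1}<1$, whose sum is absorbed into $C_1$. For \eqref{eq: ren42} in the regime $\psi(\alpha)<1$ we sum over $k\le -1$: the series now has ratio $\psi(\alpha)<1$ and its sum is absorbed into $C_2$. A coarser Bahadur--Rao-based exponential tail bound for $|k|\ge u^{1/2-\delta}$ justifies truncating the summations in both cases.

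\textbf{Main obstacle.} The hardest step is the matching lower bound on $\P[\tau_u=n]$: one has to prove that the first-moment bound $N^n\P[S_n > u]$ is asymptotically sharp, which requires ruling out the scenario that a cousin line of descent has already overshot $u$ before the generation of first passage. The subcritical case $\psi(\alpha)<1$ is especially delicate because we are already inside a large-deviation regime for $\tau_u<\infty$: our asymptotic $e^{-\Psi^*(\rho)u/\rho}$ must be a genuine sub-exponential refinement of the unconditional tail $\P[\tau_u<\infty]\sim c_+ e^{-\alpha_0 u}$ from \eqref{eq: jo}, which in turn rests on the convex-analytic inequality $\Psi^*(\rho)/\rho > \alpha_0$ whenever $\rho<\rho_0$. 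Verifying this inequality and the required second-moment / spine control uniformly in $k$ is where the bulk of the technical effort will lie.
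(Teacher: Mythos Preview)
Your high--level architecture (sharp pointwise asymptotic for $\P[\tau_u=n]$, then a geometric summation over $k$) is exactly what the paper does; the paper's Proposition~3.1 plays the role of your ``single sharp asymptotic'' and its Lemma~4.2 the role of your tail truncation. The summation you describe is carried out verbatim in the paper (with window $|k|\le D\log u$ rather than $u^{1/2-\delta}$, which is all Lemma~4.2 supports).

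The genuine gap is in the pointwise step. Your upper bound $\P[\tau_u=n]\le N^n\P[S_n>u]$ is \emph{not} asymptotically sharp with constant~$1$: it loses a constant factor once from the union bound (the $N^n$ paths overlap) and a second constant factor from ignoring the first--passage constraint $\overline M_{n-1}\le u$. A spine / second--moment argument of the type you sketch will produce a lower bound of the correct order, but with a strictly smaller constant, so you end up with
\[
0<c_1\;\le\;\liminf_{n}\sqrt n\,e^{\Psi^*(\rho)n}\,\P[\tau_{n\rho}=n]\;\le\;\limsup_{n}\ \cdots\ \le\;c_2,
\]
and nothing in your plan shows the limit exists. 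But the theorem asserts $\sim$, not $\asymp$, and your summation step needs the exact pointwise asymptotic (with a constant independent of $k$) to output a genuine $\sim$ for the sums.

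The paper closes this gap by a block decomposition, not by a spine change of measure. For a large fixed $L$ it sets, for $\gamma\in T_{n-L}$,
\[
B_{\gamma,L}=\{M_\gamma\le u,\ M_{\T_{\gamma,L-1}}\le u,\ M_{T_{\gamma,L}}>u\},
\]
and computes (Lemma~3.3) $\P[B_{\gamma,L}]\sim c\,C_L\,e^{-\Psi^*(\rho)n}/(\sqrt n\,N^{n-L})$ with an explicit $C_L=\E\big[(e^{\alpha\overline M_L}-e^{\alpha M_{L-1}})_+\big]$, by conditioning on $S_\gamma$ lying within $n^{1/4}$ of the barrier and applying Petrov uniformly. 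It then shows both that the events $B_{\gamma,L}$ are nearly disjoint (pairwise overlaps are $O(\psi(\beta)^L)$ times the first moment) and that $\bigcup_\gamma B_{\gamma,L}$ differs from $\{\tau_u=n\}$ by an event of probability $O(\kappa^L)$ times the main term, for some $\kappa<1$; letting $L\to\infty$ squeezes the $\liminf$ and $\limsup$ together. The order--of--magnitude lower bound itself (Lemma~3.2) is obtained not via a spine but by restricting to the sparse set $U=\{\gamma\in T_n:\gamma|_{n-L+1}=\cdots=\gamma|_n=1\}$ and running inclusion--exclusion there.

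So: keep your summation step, but replace ``union bound above, spine below, they match'' by the block argument. Without it you will only get two--sided order bounds, which is Lemma~3.2 of the paper, not Proposition~3.1, and is not enough to conclude the $\sim$ statements in the theorem.
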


The above results generalize classical estimates for random walks (von Bahr \cite{von}, Lalley \cite{Lalley}, H\"oglund \cite{hoglund}, see also Buraczewski, Ma\'slanka \cite{BM}). Similar results were proved recently for analogous first passage times in the context of random difference equation (see \cite{BCDZ, BDZ}) and
branching process in random environment (see Afanasyev \cite{af1,af2} and Buraczewski, Dyszewski \cite{BD}).

\section{Auxiliary definitions and results}

\subsection{Some further definitions}
Here we collect some additional definitions that will be needed in the proof. If $\g_1 = i_1^1i_2^1..i_{n_1}^1\in \TT$ and $\g_2 = i_1^2i_2^2..i_{n_2}^2\in \TT$ then we write $\g_1 \g_2 = i_1^1i_2^1..i_{n_1}^1i_1^2i_2^2..i_{n_2}^2 $
for the element of $\TT$ obtained by juxtaposition. In particular $\g\emptyset = \emptyset \g = \g$. We write $\g' < \g$ if $\g'$ is a proper prefix of $\g$, i.e. $\g' = (i_1, .., i_k)$ for some $k < n$.  For two vertices $\g_1$ and $\g_2$ we denote by $\g_0 = \g_1\wedge \g_2$ the longest common subsequence of $\g_1$ and $\g_2$, i.e. the maximal $\g_0$ such that both $\g_0\le \g_1$ and $\g_0\le \g_2$. Moreover, we write $\g' \leq \g$ if $\g' < \g$ or $\g' = \g$.

We will consider different subsets of the tree and maximums over these subsets, namely we
define
\begin{align*}
T_{\g,n} &= \{ \g\g':\; |\g'| = n\},\\
\T_{\g,n} &= \bigcup_{k=1}^n T_{\g,k} =  \{ \g\g':\; 0< |\g'|\le n\}.\\
\end{align*}
Thus $T_{\g,n}$ is the set of progeny of $\g$ in generation $|\g|+n$ and $\T_{\g,n}$ consists of all descendants of $\g$ up to time $|\g|+n$.
If $\g = o$ we omit the vertex in the subscript and just write
$$
T_{n} = T_{o,n} =  \{ \g':\; |\g'| = n\},
$$ that is $T_n$ is exactly the $n$th generation.
Moreover we define
\begin{align*}
M_{\g} &= \max_{n\le |\g|} S_{\g|n},\\
M_{A} &= \max_{ \g\in A} S_{\g}, \qquad \mbox {for any } A \subset \mathcal{T},\\
\overline M_n &= \max_{ |\g|\le n} S_{\g} = \max_{k\le n} M_k.\\
\end{align*}

To simplify our notation we consider a generic random walk $\S = \X_1+\cdots + \X_n$ where $\X_i$ are i.i.d. copies of $X$. Then for any $\g \in T_n$, $S_\g$ and $\S_n$ have the same law. Define $\M_n = \max_{1\le k \le n} \S_k$ and $\ov \tau_u = \inf\{ k:\; \S_k >u \}$. We define also $\S_i^n = \S_n - \S_{n-i} = \X_{n-i+1}+\cdots+\X_n$.

\subsection{Large deviations for classical random walks}

The proofs of our results strongly base on large deviation estimates of the usual random walk. We recall here
  Petrov's Theorem \cite{Petrov}. Similar result was proved independently by
Bahadur and Rao \cite{BR}, but since we need uniform estimates we follow here \cite{Petrov}. We state here slightly more general version that will be needed in subsequent sections.

\begin{lem}
\label{lem:petrov2}
Assume that the law of $X$ is non-lattice. Choose $\rho$ such that $\E X <\rho < \rho_\8$,  let $\a$ be the parameter satisfying $\Psi'(\a)=\rho$ and let
$\sigma = \Psi''(\a)$.
Then
$$
\P\big[ \S_{n+j_n} > n(\rho+\eps_n) \big] \sim \frac 1{\a \sigma \sqrt{2\pi n}} e^{-n\Psi^*(\rho)} N^{-(n+j_n)} \psi(\a)^{j_n} e^{-\a n \eps_n} e^{-\frac{(-\rho j_n + n\eps_n)^2}{2\sigma^2n}}
$$
as $n\to\8$ uniformly with respect to
$$
\sqrt n |\eps_n| \le \delta_n \to 0 \quad \mbox{and} \quad  |j_n| \le b \sqrt{n\log n}.
$$
\end{lem}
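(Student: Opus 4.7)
The plan is to derive this refined Petrov-type estimate by the standard \emph{exponential (Cram\'er) tilting} argument at the conjugate parameter $\a$, followed by an application of a local central limit theorem under the tilted law and a Mills-ratio tail expansion, with careful tracking of uniformity in $(\eps_n, j_n)$. Write $m = n+j_n$, $y_n = n(\rho+\eps_n)$ and $\Delta = y_n - m\rho = n\eps_n - \rho j_n$; in the admissible range $|\Delta|=O(\sqrt{n\log n})$ and in particular $|\Delta|/(m\sigma)\to 0$. Let $\P_\a$ denote the Cram\'er tilt of $\P$ with density $e^{\a x - \Lambda(\a)}$ with respect to the law of $X$; under $\P_\a$ the increments have mean $\Lambda'(\a)=\rho$ and variance $\Lambda''(\a)=\sigma$, and the tilting identity reads
\begin{equation*}
\P[\S_m > y_n]
 \;=\; e^{m\Lambda(\a) - \a y_n}\, \E_\a\!\left[e^{-\a(\S_m - y_n)}\, \mathbbm{1}_{\{\S_m > y_n\}}\right].
\end{equation*}

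The main calculation is then the evaluation of this tilted expectation. Centering $W=\S_m-m\rho$ (so $\mathbbm{1}_{\{\S_m>y_n\}}=\mathbbm{1}_{\{W>\Delta\}}$ and $e^{-\a(\S_m-y_n)}=e^{\a\Delta}e^{-\a W}$) and substituting the $\mathcal N(0,m\sigma)$ approximation for the law of $W$ under $\P_\a$ (justified by Stone's local CLT for non-lattice sums, valid since $X$ is non-lattice and $\a<\a_\8$), the expectation is asymptotic to
\begin{equation*}
e^{\a\Delta}\,\frac{1}{\sqrt{2\pi m\sigma}}\int_\Delta^\infty e^{-\a w - w^2/(2m\sigma)}\,dw.
\end{equation*}
Completing the square in the exponent turns this into $\frac{1}{\sqrt{2\pi}}\,e^{\a\Delta+\a^2 m\sigma/2}\int_{x_m}^\infty e^{-u^2/2}\,du$ with $x_m = (\Delta+\a m\sigma)/\sqrt{m\sigma}\to \infty$, and Mills' ratio $\int_x^\infty e^{-u^2/2}du\sim e^{-x^2/2}/x$ combined with $x_m\sim\a\sqrt{m\sigma}$ yields
\begin{equation*}
\E_\a\!\left[e^{-\a(\S_m - y_n)}\, \mathbbm{1}_{\{\S_m > y_n\}}\right]\; \sim\; \frac{1}{\a\sqrt{2\pi m\sigma}}\,e^{-\Delta^2/(2m\sigma)}.
\end{equation*}

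It then remains to rewrite the prefactor $e^{m\Lambda(\a)-\a y_n}$ in the claimed form. From $\Lambda(\a)=\Psi(\a)-\log N$ we get $e^{m\Lambda(\a)}=N^{-m}\psi(\a)^m=N^{-(n+j_n)}\psi(\a)^n\psi(\a)^{j_n}$, and \eqref{eq:ren24} in the form $\Psi^*(\rho)=\a\rho-\Psi(\a)$ turns $e^{n\Psi(\a)-\a n\rho}$ into $e^{-n\Psi^*(\rho)}$, so
\begin{equation*}
e^{m\Lambda(\a)-\a y_n} \;=\; N^{-(n+j_n)}\,\psi(\a)^{j_n}\,e^{-n\Psi^*(\rho)}\,e^{-\a n\eps_n}.
\end{equation*}
Replacing $m$ by $n$ in the slowly varying factors (allowed since $|j_n|/n\to 0$) and identifying $\Delta = -\rho j_n + n\eps_n$ produces precisely the asymptotic stated in the lemma.

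The main obstacle is establishing \emph{uniformity} of all the above approximations throughout the prescribed range $\sqrt n\,|\eps_n|\le \delta_n\to 0$ and $|j_n|\le b\sqrt{n\log n}$. Because the Gaussian exponent $\Delta^2/(2m\sigma)$ can grow like $\log n$, the main term is only polynomially small in $n$, so one needs a version of Stone's local CLT whose relative error is $o(1/\sqrt n)$ uniformly on the relevant window, together with the fact that the Mills-ratio asymptotic applies uniformly, since $\a m\sigma$ dominates $|\Delta|$ by a factor of order $\sqrt{n/\log n}$. Once uniformity is secured, the algebraic identification of the prefactor and the replacement of $m$ by $n$ in the slowly varying terms are entirely routine.
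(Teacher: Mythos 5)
Your route is genuinely different from the paper's. The paper does not re-derive anything: it quotes Petrov's Theorem 2, which already gives the asymptotics of $\P[\S_n > n(\rho+\eps_n)]$ uniformly for $|\eps_n|\le\delta_n\to 0$, and obtains the shifted-index version purely by the reparametrization $(n,\eps_n)\mapsto\big(n+j_n,\ -\tfrac{\rho j_n}{n+j_n}+\tfrac{n\eps_n}{n+j_n}\big)$, noting that the new perturbation is $O(\sqrt{\log n/n})$ and hence admissible, and that replacing $n+j_n$ by $n$ in the Gaussian exponent costs only a factor $1+o(1)$. You instead re-prove the Bahadur--Rao/Petrov asymptotic from scratch by tilting at the fixed conjugate point $\a$ and evaluating $\E_\a\big[e^{-\a W}\mathbbm{1}_{\{W>\Delta\}}\big]$. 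All of your algebra is correct: the tilting identity, the completion of the square, Mills' ratio with $x_m\sim\a\sigma\sqrt m$ (uniform since $\Delta=o(m)$), and the rewriting of the prefactor via $\Psi^*(\rho)=\a\rho-\Psi(\a)$ and $\Lambda=\Psi-\log N$ all check out, modulo the paper's own inconsistent use of $\sigma$ versus $\sigma^2$, which you inherit.

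The gap is exactly where you locate it, and you do not close it. Because $|\Delta|$ may be as large as $C\sqrt{n\log n}$, the target quantity has size $m^{-1/2}e^{-\Delta^2/(2m\sigma^2)}\asymp n^{-1/2-d}$ with $d$ up to $\rho^2b^2/(2\sigma^2)$. Stone's local CLT and the Berry--Esseen/Esseen expansions control the \emph{additive} error uniformly in the argument, at best at order $o(m^{-1/2})$ without any Gaussian factor; such an error swamps the main term on most of the window, so ``substituting the $\mathcal N(0,m\sigma^2)$ approximation for the law of $W$'' is not justified by the result you cite (your phrase ``relative error $o(1/\sqrt n)$'' conflates the two notions). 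What is actually needed is a moderate-deviation (Cram\'er-series) local or integro-local theorem with \emph{relative} error $1+o(1)$ uniformly for $|\Delta|\le C\sqrt{m\log m}$; this is available here because $\a<\a_\8$ guarantees the tilted increments retain exponential moments, so the Cram\'er correction $\exp\{O(\Delta^3/m^2)\}=\exp\{O((\log m)^{3/2}/\sqrt m)\}$ is negligible --- but stating and proving that uniform estimate is essentially the entire content of Petrov's theorem. In short, your argument reduces the lemma to the hard analytic core of the very result the paper invokes as a black box; to make it a complete proof you must either supply that moderate-deviation local limit theorem or, as the paper does, cite Petrov and perform the two-line substitution.
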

\begin{proof}
  First let us recall Theorem 2 in \cite{Petrov} saying that
\begin{align*}
\P\big[ \S_{n} > n(\rho+\eps_n) \big] &\sim \frac 1{\a \sigma \sqrt{2\pi n}} e^{n\big( \Lambda(\a) - \a (\rho +\eps_n) - \frac{\eps_n^2}{2\sigma^2} (1+ |\eps_n|) \big)}\\
& =  \frac 1{\a \sigma \sqrt{2\pi n}} e^{-n\Psi^*(\rho)}N^{-n} e^{-\a \eps_n n}
e^{-  \frac{n\eps_n^2}{2\sigma^2} (1+ |\eps_n|)}
\end{align*}
as $n\to \8$, uniformly for $|\eps_n| < \d_n\to 0$. Replacing in the formula above $(n,\eps_n)$ by $\big(n+j_n, -\frac{\rho j_n}{n+j_n} + \frac{n\eps_n}{n+j_n}\big)$ we obtain
\begin{align*}
\P\big[ \S_{n+j_n} > n(\rho+\eps_n) \big] &=
\P\bigg[ \S_{n+j_n} > (n+j_n)\bigg( \rho- \frac{\rho j_n}{n+j_n} + \frac{n\eps_n}{n+j_n} \bigg) \bigg] \\
&\sim
 \frac 1{\a \sigma \sqrt{2\pi (n+j_n)}} e^{-(n+j_n)\Psi^*(\rho)} N^{-(n+j_n)}  e^{-\a(-\rho j_n + n\eps_n)}
  e^{-\frac{(-\rho j_n + n\eps_n)^2}{2\sigma^2(n+j_n)}}\\
&\sim  \frac 1{\a \sigma \sqrt{2\pi n}} e^{-n\Psi^*(\rho)} N^{-(n+j_n)} \psi(\a)^{j_n} e^{-\a_n \eps_n} e^{-\frac{(-\rho j_n + n\eps_n)^2}{2\sigma^2n}}.
\end{align*}
The last line follows from the fact, that in the considered region $  e^{\frac{(-\rho j_n + n\eps_n)^2}{2\sigma^2(n+j_n)}}  e^{-\frac{(-\rho j_n + n\eps_n)^2}{2\sigma^2n}}$
converges uniformly to 1 as $n$ tends to $\8$.
\end{proof}

\subsection{Large deviations for first passage times for random walks}

The following Lemma was proved in \cite{hoglund} and \cite{BM}

\begin{lem}
\label{lem:1}
If the distribution of $X$ is non-lattice, $\E X <0$ and $\rho = \Psi'(\a) > 0$ for some $\rho, \a$, then
$$
\P[\ov\tau_{n\rho}=n] = \P\big[\M_{n-1} \le n\rho, \S_n > n\rho \big]
 \sim  \frac{c(\rho) e^{-\Psi^*(\rho) n}}{\sqrt n N^{n}}.
$$
\end{lem}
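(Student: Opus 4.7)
The plan is to reduce the problem to a centered-walk asymptotic via Cram\'er tilting, use reversal duality to recast the first-passage event as a running-minimum condition, and extract the $1/\sqrt n$ factor from the non-lattice local central limit theorem. Introduce the change of measure $d\wt\P/d\P = \lambda(\a)^{-n} e^{\a \S_n}$ on $\sigma(\X_1,\ldots,\X_n)$, under which the $\X_i$ are i.i.d.\ with mean $\Lambda'(\a) = \rho$ and variance $\sigma^2 = \Psi''(\a)$. From $\Psi = \log N + \Lambda$ and \eqref{eq:ren24} one has $\lambda(\a)^n e^{-\a n\rho} = N^{-n} e^{-n\Psi^*(\rho)}$, so
\[
\P[\ov\tau_{n\rho}=n] = \frac{e^{-n\Psi^*(\rho)}}{N^n}\,\wt\E\bigl[\mathbf{1}_{\ov\tau_{n\rho}=n}\,e^{-\a(\S_n - n\rho)}\bigr],
\]
and it suffices to show that the tilted expectation is asymptotic to $c(\rho)/\sqrt n$.

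Exchangeability of the $\X_i$ under $\wt\P$ yields $(Y_k)_{k=0}^n := (\S_n - \S_{n-k})_{k=0}^n \stackrel{d}{=} (\S_k)_{k=0}^n$. Substituting $\S_k = Y_n - Y_{n-k}$ turns $\{\ov\tau_{n\rho}=n\}$ into $\{\min_{1\le j\le n-1} Y_j \ge Y_n - n\rho\} \cap \{Y_n > n\rho\}$, and after replacing $Y$ by $\S$ (equal in distribution) and setting $y = \S_n - n\rho$, we arrive at
\[
\wt\E\bigl[\mathbf{1}_{\ov\tau_{n\rho}=n}\,e^{-\a(\S_n-n\rho)}\bigr]
= \int_0^\infty e^{-\a y}\,\wt\P\bigl[\min_{1\le j\le n-1}\S_j > y,\ \S_n - n\rho \in dy\bigr].
\]
Under $\wt\P$ the walk has positive drift $\rho$, so $I_\infty := \inf_{j\ge 1}\S_j$ is a.s.\ finite; put $h(y) := \wt\P[I_\infty > y]$. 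The non-lattice local CLT gives $\wt\P[\S_n - n\rho \in dy] \sim (\sigma\sqrt{2\pi n})^{-1}\,dy$ uniformly on compact $y$-sets, while splitting the path at time $\lfloor\sqrt n\rfloor$---with the late block pushed up to $+\infty$ by the drift---shows that the minimum over $[1,n-1]$ is asymptotically governed by the early block, so $\wt\P[\min_{1\le j\le n-1}\S_j > y \mid \S_n - n\rho = y] \to h(y)$. Integration against $e^{-\a y}$ then gives the desired asymptotic with $c(\rho) = (\sigma\sqrt{2\pi})^{-1}\int_0^\infty e^{-\a y} h(y)\,dy$.

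The main obstacle is to justify the interchange of the $n\to\infty$ limit with the $y$-integral: one needs a uniform-in-$n$ envelope for $\sqrt n \cdot \wt\P[\min_{1\le j\le n-1}\S_j > y,\ \S_n - n\rho \in dy]/dy$ that is integrable against $e^{-\a y}$. Such a bound follows by combining the uniform local estimate from Lemma~\ref{lem:petrov2} with the exponential tail decay of $h$---guaranteed because $\X$ still has exponential moments of order up to $\a_\infty - \a > 0$ under $\wt\P$, so $\wt\P[\X_1 > y]$ and hence $h(y)$ decay exponentially. The non-lattice assumption is essential for the density form of the local CLT.
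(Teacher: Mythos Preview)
The paper does not actually prove this lemma: its ``proof'' consists of quoting Theorem~2.3 of \cite{BM} and performing the one-line notational translation from $\Lambda$ to $\Psi$ (they differ by the constant $\log N$, so $\Psi'=\Lambda'$ and $\Psi^*(\rho)=\Lambda^*(\rho)-\log N$). Your sketch, by contrast, supplies the underlying argument---Cram\'er tilting to center the walk, time reversal to convert the first-passage event into a running-minimum condition, and the non-lattice local limit theorem to extract the $1/\sqrt n$ factor. This is the standard route to such first-passage asymptotics and is essentially what \cite{BM} carries out, so you have given a correct elaboration of the cited result rather than a genuinely different proof.

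One minor remark on the dominated-convergence step. The exponential decay rate you derive for $h(y)=\wt\P[\inf_{j\ge1}\S_j>y]$ via $\wt\P[\X_1>y]$ is at most $\a_\infty-\a$, which need not exceed $\a$; but this is harmless, because you are integrating against $e^{-\a y}$, which is already integrable on $[0,\infty)$. All you actually need is a uniform-in-$(n,y)$ bound $\sqrt n\,\wt\P[\S_n-n\rho\in(y,y+1)]\le C$, i.e.\ the Esseen concentration-function inequality (or the uniform part of Stone's local limit theorem); the envelope $C e^{-\a y}$ then does the job without any appeal to the tail of $h$. Equivalently, one can undo the tilting on the region $\{y>K\}$ and apply Lemma~\ref{lem:petrov2} to $\P[\S_n>n\rho+K]$ to see that the tail contributes $O(e^{-\a K})$ after normalisation.
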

\begin{proof} We follow here the notation in \cite{BM} where the results were formulated in
terms of the function $\Lambda$ (see Theorem 2.3 in \cite{BM}). However,
since $\Psi'(\a)=\Lambda'(\a)$ and $\Psi(\a)=\log N +\Lambda(\a)$,  we have
$$
\P[\ov\tau_{n\rho}=n] \sim   \frac{c e^{-\a n\rho} e^{\frac{\Lambda(\a)}{\Lambda'(\a)}n\rho }}{\sqrt n}
=  \frac{c e^{-\a n\rho} e^{\frac{\Psi(\a)}{\Psi'(\a)}n\rho }e^{-{n \log N}}}{\sqrt n}
=\frac{c e^{-\Psi^*(\rho) n}}{\sqrt n N^{n}}.
$$
\end{proof}

However, for our purpose a uniform version of this result is needed.

\begin{lem}
  \label{lem: bmu}
If the distribution of $X$ is non-lattice, $\E X <0$ and $\rho = \Psi'(\a) > 0$ for some $\rho, \a$, then
  \begin{equation}\label{eq:bmu1}
\P[\ov \tau_{n\rho+a_n}=n] \sim  \frac{c(\rho) e^{-\a a_n} e^{-\Psi^*(\rho) n}}{\sqrt n N^{n}}
  \end{equation}
  uniformly as $n\to\8$ for any sequence $a_n$ such that $-K\log n < a_n < K\log n$ for some constant $K$.

  If $\rho=\rho_0$ and $\a=\a_0$, then
  \begin{equation}\label{eq:bmu2}
\P[\ov \tau_{n\rho_0}=n+j_n] \sim  \frac{c(\rho_0)  }{\sqrt n N^{n+j_n}} \; e^{-\Psi^*(\rho_0) n} e^{-\frac{\rho_0^2 j_n^2}{2\sigma_0^2 n}}
  \end{equation}
  uniformly as $n\to\8$ for any sequence $j_n$ such that $|j_n| \le b \sqrt{n\log n}$ for some constant $b$.
\end{lem}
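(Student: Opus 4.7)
Both estimates follow from the Cram\'er--Esscher exponential change of measure at parameter $\a$ (resp.\ $\a_0$), which reduces the problem to a first-passage question for a random walk with strictly positive drift, followed by a uniform local central limit theorem.

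For \eqref{eq:bmu1} I would tilt by $\a$; using $\Psi^*(\rho)=\a\rho-\Psi(\a)$ and $\Psi(\a)=\log N+\Lambda(\a)$ one obtains
$$
\P[\ov\tau_{n\rho+a_n}=n]
=\frac{e^{-\a a_n}\,e^{-\Psi^*(\rho)n}}{N^n}\,
\E_\a\!\left[e^{-\a(\S_n-n\rho-a_n)}\mathbbm{1}_{\{\ov\tau_{n\rho+a_n}=n\}}\right].
$$
Under $\P_\a$ the walk has drift $\rho$ and variance $\sigma^2$, so $n\rho+a_n$ is the typical position at time $n$ and the event inside the expectation is on the CLT scale. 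Repeating the reversal/ladder-height analysis from the proof of Lemma \ref{lem:1} in \cite{BM}---now keeping track of the shift $a_n$---the tilted expectation is asymptotic to $c(\rho)/\sqrt n$ uniformly in $|a_n|\le K\log n$, since the overshoot density and the renewal constant are continuous at the origin and the shift $a_n=o(\sqrt n)$ is negligible on the CLT scale. Combining this with the prefactor $e^{-\a a_n}$ gives \eqref{eq:bmu1}.

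For \eqref{eq:bmu2} one cannot reduce to \eqref{eq:bmu1}, because writing $\{\ov\tau_{n\rho_0}=n+j_n\}=\{\ov\tau_{m\rho_0+a_m}=m\}$ with $m=n+j_n$ forces $|a_m|=\rho_0|j_n|$ of order $\sqrt{n\log n}$, outside the admissible window. Instead, tilt at $\a_0$: since $\psi(\a_0)=1$ gives $\lam(\a_0)=1/N$ and $\Psi(\a_0)=0$, so $\Psi^*(\rho_0)=\a_0\rho_0$, one obtains
$$
\P[\ov\tau_{n\rho_0}=n+j_n]
=\frac{e^{-\Psi^*(\rho_0)n}}{N^{n+j_n}}\,
\E_{\a_0}\!\left[e^{-\a_0(\S_{n+j_n}-n\rho_0)}\mathbbm{1}_{\{\ov\tau_{n\rho_0}=n+j_n\}}\right].
$$
Under $\P_{\a_0}$ the walk has positive drift $\rho_0$ and variance $\sigma_0^2$, so the first passage time of the level $n\rho_0$ is typically $n$ with Gaussian fluctuations of order $\sigma_0\sqrt n/\rho_0$. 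The Hoglund-type uniform local CLT for first-passage times of a positively drifting random walk (\cite{hoglund}) yields
$$
\P_{\a_0}[\ov\tau_{n\rho_0}=n+j_n]
\;\sim\;\frac{\rho_0}{\sigma_0\sqrt{2\pi n}}\,\exp\!\left(-\frac{\rho_0^2 j_n^2}{2\sigma_0^2 n}\right)
$$
uniformly in $|j_n|\le b\sqrt{n\log n}$; conditionally on $\ov\tau_{n\rho_0}=n+j_n$ the overshoot $\S_{n+j_n}-n\rho_0$ converges in law to the stationary overshoot distribution of the $\P_{\a_0}$-walk, independently of $j_n$, and its exponential moment under $e^{-\a_0\,\cdot}$ produces the remaining constant. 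Assembling the three factors gives \eqref{eq:bmu2}.

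The main technical obstacle is establishing these two uniformities across the full moderate-deviations window $|j_n|\le b\sqrt{n\log n}$: a local CLT for $\ov\tau_{n\rho_0}$ that is exact throughout this range, and the asymptotic independence of the overshoot from $j_n$. Both are standard but delicate; the first can be derived from Lemma \ref{lem:petrov2} applied under $\P_{\a_0}$ combined with the same reversal argument used for \eqref{eq:bmu1} (now at positions $n\rho_0$ shifted by the moderate deviation $-\rho_0 j_n$), and the second is a uniform version of the classical overshoot renewal theorem for a random walk with positive drift.
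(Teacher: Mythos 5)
Your plan is correct and follows essentially the route the paper intends: the paper in fact omits the proof of this lemma, deferring to the tilting/time-reversal argument of \cite{BM}, which is precisely the Cram\'er--Esscher change of measure plus uniform first-passage local CLT and overshoot analysis that you describe, and your algebraic reductions (the two exact prefactor identities, and the observation that \eqref{eq:bmu2} is not a special case of \eqref{eq:bmu1} because the induced shift $-\rho_0 j_n$ leaves the $O(\log n)$ window) are all correct. You also correctly isolate the only delicate points --- uniformity of the local limit theorem over the moderate-deviation range $|j_n|\le b\sqrt{n\log n}$ and uniformity of the overshoot limit --- which are exactly the ``minor corrections'' to \cite{BM} that the paper alludes to.
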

We omit the proof since it is very close to the arguments presented in \cite{BM}. Only a few places require some minor corrections. Below in the proof we use similar techniques and we  will explain in details the role of uniformity.

\section{Uniform large deviations for $\tau_u$}

The aim of this section is to prove a stronger version of \eqref{eq: ren43}  with perturbed parameters.
In the next section we will use this result to deduce Theorems \ref{thm: tuclt} and \ref{thm: tu-ld}.
To simplify our presentation we state both results separately.

\begin{prop} \label{prop: cz1} For any $\rho\in (0,\rho_\8)$ and $\a$ such that $\Psi'(\a)=\rho$,
\begin{equation}\label{eq: ren44}
\P\big[\tau_{n\rho + a_n} = n \big] =   \P\big[ \ov M_{n-1} \le n\rho + {a_n}, M_n > n\rho + {a_n} \big] \sim C(\rho) \frac{e^{-\a {a_n}} e^{- \Psi^*(\rho) n}}{\sqrt{n}}
\end{equation}
uniformly as $n\to\8$ for any sequence ${a_n}$ such that $-K \log n < {a_n} < K \log n$ for some large constant $K$.

If $\rho = \rho_0$ and $\a=\a_0$, then
\begin{equation}\label{eq: ren44'}
\P\big[\tau_{n\rho_0} = n+j_n \big]  = \P\big[ \ov M_{n+j_n-1} \le n\rho_0, M_{n+j_n} > n\rho_0  \big] \sim \frac{  C(\rho_0) }{\sqrt{n}} e^{- \a_0 n\rho_0}e^{-\frac{\rho_0^2 j_n^2}{2\sigma_0^2 n}}
\end{equation}
uniformly as $n\to\8$ for any $|j_n|< b \sqrt{n\log n}$.
\end{prop}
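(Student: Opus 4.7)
Write $u=n\rho+a_n$. The plan is a spinal decomposition: identify the first particle of $T_n$ to exceed $u$ and reduce the analysis to the random-walk first-passage estimate of Lemma~\ref{lem: bmu} combined with a survival factor for the off-spine subtrees. Call $\g\in T_n$ a \emph{spine record} if $S_{\g|_k}\le u$ for every $k<n$ and $S_\g>u$; set $N_n=\#\{\g\in T_n:\g\text{ spine record}\}$. Since any $\g\in T_n$ with $S_\g>u$ is automatically a spine record on $\{\ov M_{n-1}\le u\}$, one has $\{\tau_u=n\}=\{N_n\ge 1\}\cap\{\ov M_{n-1}\le u\}$.

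By the many-to-one identity, $\E N_n=N^n\P[\ov\M_{n-1}\le u,\S_n>u]=N^n\P[\ov\tau_u=n]$, which by Lemma~\ref{lem: bmu} is $\sim c(\rho)e^{-\a a_n}e^{-\Psi^*(\rho)n}/\sqrt n$ uniformly in $|a_n|\le K\log n$. This gives a first-moment upper bound $\P[\tau_u=n]\le\E N_n$ of the correct order, but not with the sharp constant. For the sharp asymptotic, decompose $\P[\tau_u=n]=\sum_\g\P[A_\g]-O(\E[N_n(N_n-1)])$ with $A_\g=\{\g\text{ spine record}\}\cap\{\ov M_{n-1}\le u\}$; the pair term $\E[N_n(N_n-1)]$ is controlled by a standard second-moment BRW computation, splitting by common-prefix length $j\in\{0,\ldots,n-1\}$ and applying Lemma~\ref{lem: bmu} at each continuation.

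For the diagonal $\sum_\g\P[A_\g]$, condition on the spine $(S_{\g|_0},\ldots,S_{\g|_n})$: given the spine, the $N-1$ off-spine subtrees rooted at the siblings of $\g|_{k+1}$ (for $k=0,\ldots,n-2$) are independent BRWs. With $\ov M'_m$ denoting the maximum of an independent BRW of depth $m$ (convention $\ov M'_0=0$) and $X$ an independent copy of the displacement, set
\[
F_n(s_0,\ldots,s_{n-1})=\prod_{k=0}^{n-2}\bigl(\P[X+\ov M'_{n-k-2}\le u-s_k]\bigr)^{N-1}.
\]
The many-to-one identity then gives $\sum_{\g\in T_n}\P[A_\g]=N^n\,\E\bigl[\mathbf 1_{\ov\M_{n-1}\le u,\,\S_n>u}\,F_n(\S_0,\ldots,\S_{n-1})\bigr]$. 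Under the exponential tilt with parameter $\a$ the walk has drift $\rho$, and conditional on the first-passage event $E_n=\{\ov\M_{n-1}\le u,\S_n>u\}$ the reversed overshoots $(u-\S_{n-j})_{j\ge 0}$ converge (non-lattice renewal theorem) to a stationary sequence $(R_j)_{j\ge 0}$. Using \eqref{eq: jo} as a dominating function, dominated convergence yields $\E[F_n\mid E_n]\to\kappa(\rho)\in(0,1]$, uniformly in $|a_n|\le K\log n$. Combining gives $\P[\tau_u=n]\sim N^n\P[E_n]\kappa(\rho)=C(\rho)e^{-\a a_n}e^{-\Psi^*(\rho)n}/\sqrt n$ with $C(\rho)=c(\rho)\kappa(\rho)$, proving \eqref{eq: ren44}. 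The formula \eqref{eq: ren44'} follows identically with $\a=\a_0$ and the second part of Lemma~\ref{lem: bmu} in place of the first: the Gaussian factor $e^{-\rho_0^2 j_n^2/(2\sigma_0^2 n)}$ comes directly from that lemma, while the survival factor still converges to $\kappa(\rho_0)$ since the $o(n)$ shift $j_n$ does not affect the renewal limit.

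The technical heart is the survival-factor convergence $\E[F_n\mid E_n]\to\kappa(\rho)$: it requires the joint renewal limit for the reversed conditioned walk, integrability via the BRW tail \eqref{eq: jo}, and uniformity in $a_n$ (or $j_n$), all inherited from the uniform-in-$\eps_n$ structure of Petrov's theorem underlying Lemma~\ref{lem: bmu}.
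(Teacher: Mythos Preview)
Your spinal/survival-factor picture is appealing, but the inclusion--exclusion step fails as written. The error term you propose, $\E[N_n(N_n-1)]$, is \emph{not} $o\bigl(\sum_\g\P[A_\g]\bigr)$; it is of the same order. To see this, look at pairs $\g,\g'$ that split only at the last step ($|\g\wedge\g'|=n-1$): conditional on $\g$ being a spine record, the undershoot $u-\S_{n-1}$ has a nondegenerate renewal limit, so an independent sibling step exceeds $u$ with probability bounded away from $0$. This single value of $k$ already contributes a positive multiple of $\E N_n$ to $\E[N_n(N_n-1)]$. Equivalently, on $\{\tau_u=n\}$ the count $Z=\#\{\g\in T_n:S_\g>u\}$ satisfies $\E[Z\mid\tau_u=n]\to c>1$, so your diagonal sum $\sum_\g\P[A_\g]=\E[\mathbf 1_{\ov M_{n-1}\le u}\,Z]$ has a strictly larger leading constant than $\P[\tau_u=n]=\E[\mathbf 1_{\ov M_{n-1}\le u}\,\mathbf 1_{Z\ge 1}]$. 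Thus even if your renewal argument for $\E[F_n\mid E_n]\to\kappa(\rho)$ is correct, $c(\rho)\kappa(\rho)$ is the constant for $\sum_\g\P[A_\g]$, not for $\P[\tau_u=n]$.

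The paper's proof handles exactly this overcounting by introducing a block depth $L$: instead of leaves $\g\in T_n$, it works with events $B_{\g,L}$ indexed by $\g\in T_{n-L}$, each describing first passage within the subtree $\T_{\g,L}$. Distinct blocks then share ancestry only up to level $n-L$, so the pair correlation is $O(\psi(\beta)^L)$ and can be made arbitrarily small. Lemma~\ref{prop:1} gives $\P[B_{\g,L}]\sim C_L\cdot(\text{target})/N^{n-L}$ with an explicit $C_L$, and the a~priori two-sided bounds of Lemma~\ref{prop:2} let one pass to the limit in $L$ to identify $C(\rho)$. Your argument would need either this extra block parameter or a direct computation of the limiting law of $Z$ conditional on $\{\tau_u=n\}$; neither is supplied.
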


\subsection{Lower and upper estimates}

\begin{lem}\label{prop:2}
There are constants $c_1,c_2>0$ such that
\begin{equation}\label{eq: czw1}
c_1 \le  \sqrt n  e^{\a a_n} e^{\Psi^*(\rho)  n}  \P\big[ \tau_{n\rho+a_n}=n  \big] \le c_2
\end{equation}
uniformly as $n\to\8$ for any $a_n$ such that $-K \log n < a_n < K \log n$ for some large constant $K$.

Moreover, if $\rho = \rho_0$ and $\a=\a_0$, then for $c_3,c_4>0$
\begin{equation}\label{eq: czw3}
c_3 \le  \sqrt{n}   e^{ \a_0 \rho_0 n}e^{\frac{\rho_0^2 j_n^2}{2\sigma_0^2 n}}
\P\big[ \tau_{n\rho_0}=n+j_n  \big] \le c_4
\end{equation}
uniformly as $n\to\8$ for any $|j_n|< b \sqrt{n\log n}$.
\end{lem}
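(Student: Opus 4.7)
Write $u = n\rho + a_n$ and, for $\g \in T_n$, set $A_\g = \{S_{\g|k} \le u \text{ for } k \le n-1,\; S_\g > u\}$ and $B = \{\ov M_{n-1} \le u\}$. Since under $B$ the condition $A_\g$ reduces to $\{S_\g > u\}$, we have $\{\tau_u = n\} = B \cap \bigcup_\g A_\g$, and in particular $\{\tau_u = n\} \subset \bigcup_\g A_\g$. For the upper bound, the union bound and the many-to-one identity give
$$\P[\tau_u = n] \le \sum_{\g \in T_n} \P[A_\g] = N^n\,\P[\ov\tau_u = n],$$
so Lemma \ref{lem: bmu} immediately delivers the constant $c_2$ in \eqref{eq: czw1}.

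For the lower bound I would use Bonferroni,
$$\P[\tau_u = n] = \P\Big[B \cap \bigcup_\g A_\g\Big] \ge \sum_\g \P[A_\g \cap B] - \sum_{\g \ne \g'} \P[A_\g \cap A_{\g'}].$$
In the first sum, $\sum_\g \P[A_\g \cap B] = N^n \P[A_{o^*} \cap B]$ by symmetry, for a fixed ray $o^* \in T_n$. I would condition on the spine $(S_{o^*|k})_{k \le n}$: given it, the off-spine sub-BRWs are independent, rooted at $S_{o^*|k} + X'$. The probability that any of them reaches $u$ is bounded via the many-to-one estimate for the branching maximum combined with Petrov's uniform bound (Lemma \ref{lem:petrov2}) by $\lesssim \sum_k e^{-\a(u - S_{o^*|k})}$. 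Under the $\a$-tilted spine profile $S_{o^*|k} \approx k\rho$, this total is less than $\tfrac12$, so $\P[A_{o^*} \cap B] \ge \tfrac12 \P[A_{o^*}]$; combined with Lemma \ref{lem: bmu} this yields $\sum_\g \P[A_\g \cap B] \gtrsim n^{-1/2} e^{-\a a_n - \Psi^*(\rho) n}$, matching the required order.

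The second sum splits by $k = |\g \wedge \g'|$: for each $k$ there are $(N-1)N^{n-k-1}$ siblings $\g'$ per $\g$, and after conditioning on the shared prefix ending at $\S_k$ the two branches evolve independently, whence
$$\P[A_\g \cap A_{\g'}] = \E\Big[\mathbf{1}\{\S_r \le u,\, r \le k\}\, \bigl(\P[\ov\tau_{u-\S_k} = n-k]\bigr)^2\Big].$$
Squaring the Lemma \ref{lem: bmu} estimate for the inner factor on the window where $|u - \S_k - (n-k)\rho| \le K\log(n-k)$, and evaluating the $\S_k$-integral as an expectation under the $2\a$-tilted walk, the total is of order $e^{-2\a a_n - 2n\Psi^*(\rho)}$. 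Since $\Psi^*(\rho) > 0$ and $|a_n| \le K\log n$, this is of strictly smaller exponential order than the first sum and is therefore absorbed into it, producing the lower constant $c_1$.

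The critical case \eqref{eq: czw3} with $\rho = \rho_0$ follows by the same scheme with \eqref{eq:bmu2} replacing \eqref{eq:bmu1}; the Gaussian factor $\exp(-\rho_0^2 j_n^2 / (2\sigma_0^2 n))$ is inherited directly from the spine estimate. The principal obstacle is the control of the second sum: both factors in the integrand share the same exponential weight in $\S_k$, so one must carefully restrict integration to the Petrov window and then use $2\a$-tilting to handle the tails uniformly in $a_n$ (or $j_n$). A secondary subtlety arises in the bound $\P[A_{o^*} \cap B^c] \ll \P[A_{o^*}]$ when $\psi(\a) > 1$, where the off-spine subtrees grow supercritically under the $\a$-tilt and require a more refined many-to-one argument truncated at a slowly growing barrier.
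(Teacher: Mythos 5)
Your upper bound is exactly the paper's: the union bound over the events $A_\g$ (the paper's $C_\g$) together with Lemma \ref{lem: bmu}. The lower bound, however, has two genuine gaps, and they are precisely the two difficulties the paper's construction is built to avoid.

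The main gap is your claim that $\sum_{\g\ne\g'}\P[A_\g\cap A_{\g'}]$ is of order $e^{-2\a a_n-2n\Psi^*(\rho)}$ and hence of smaller exponential order than $\sum_\g\P[A_\g]$. That estimate is what one gets for pairs branching at the root; it ignores the (far more numerous) pairs that branch late. Take $\g,\g'$ siblings, $|\g\wedge\g'|=n-1$: then $\P[A_\g\cap A_{\g'}]=\E\big[\mathbf{1}_{\{\text{prefix stays below }u\}}\,\P[X>u-\S_{n-1}\mid\S_{n-1}]^2\big]$, and since on $A_\g$ the undershoot $u-\S_{n-1}$ is of order $1$, the squared factor is bounded below by a constant times the unsquared one on a set of comparable probability. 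Hence the sibling pairs alone contribute $\ge c(N-1)\sum_\g\P[A_\g]$ --- the \emph{same} order as the first Bonferroni term, with a constant that need not be $<1$; more generally the branch points with $n-k=O(1)$ contribute $\asymp\sum_{j\ge1}j^{-1}e^{-j\Psi^*(\rho)}$ times the first term. So Bonferroni over all of $T_n$ can return a negative bound. This is exactly why the paper passes to the sparse set $U$ of leaves whose last $L$ coordinates equal $1$: inside $U$ no pair branches after generation $n+j_n-L$, and the pair counts at every earlier branch point drop by $N^{-L}$, which makes the intersection sum at most $\eps_L\sum_{\g\in U}\P[\wt C_\g]$ with $\eps_L\to0$ as $L\to\8$. (A secondary issue: your $2\a$-tilting of the $\S_k$-integral needs $2\a<\a_\8$, which is not assumed.)

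The step $\P[A_{o^*}\cap B]\ge\tfrac12\P[A_{o^*}]$ fails for the same structural reason. Conditioned on the tilted spine, the off-spine subtree hanging at generation $k$ reaches $u$ before time $n$ with probability $\asymp e^{-\a_0(u-S_{o^*|_k})}\asymp e^{-\a_0(n-k)\rho}$, so the expected number of such crossings is $\asymp(N-1)\sum_{j\ge1}e^{-\a_0 j\rho}$ --- a fixed constant with no reason to be below $\tfrac12$, because subtrees rooted within $O(1)$ generations of the tip each cross with probability bounded away from $0$. The paper deals with exactly this by intersecting $C_\g$ with $\{X_{\g'}<0\ \forall\,\g'\in W_\g\}$, forcing the last $L$ generations off the chosen path downward at the price of the fixed factor $p_L$, and by the barrier event $V_{\g,L}$ for the earlier generations (Steps 2B$_1$--2B$_2$). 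Without some analogue of these devices, both halves of your lower bound only yield ``(constant) minus (possibly larger constant)'' times the target quantity.
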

\begin{proof} We prove here only the second part of the Lemma, i.e. \eqref{eq: czw3}. The first part \eqref{eq: czw1} can be proved exactly in the same way with obvious changes.

\medskip

\noindent
{\sc Step 1. Upper estimates.}
 For $\g\in T_{n+j_n}$ define
 $$C_{\gamma} = \{ M_{\gamma|_{n+j_n-1}} \leq n\rho_0, S_{\gamma} > n\rho_0 \}$$
 to be the event that the path from $o$ to $\g$ exceeds $n\rho_0$ for the first time exactly at $\g$. Then
 $$
\big\{\tau_{n\rho_0}=n +j_n \big\} \subset \bigcup_{\g\in T_{n+j_n}} C_\g
 $$ and Lemma \ref{lem: bmu} implies immediately
 \begin{align*}
 \P\big[\tau_{n\rho_0}  =n +j_n \big] &\le \P\bigg[ \bigcup_{\g\in T_{n+j_n}} C_\g \bigg] \le \sum_{\g\in T_{n+j_n}} \P[C_\g]
\\ &\le \frac {c|T_{n+j_n}|}{\sqrt n N^{n+j_n}} {c e^{-\a_0 \rho_0 n } e^{- \frac{\rho_0^2 j_n^2}{2\sigma_0^2 n}  } } \\ &=
  \frac{c}{\sqrt n} e^{-\a_0 \rho_0 n } e^{- \frac{\rho_0^2 j_n^2}{2\sigma_0^2 n}  },
 \end{align*}
 which gives the upper bound \eqref{eq: czw3}.

\medskip

{\sc Step 2. Lower estimates.} Unfortunately, to obtain the lower bound  the way we need to pass is quite long and tedious.
The idea is quite simple and bases on arguments presented in \cite{BDZ2}. We choose a sparse subset $U$ of $T_{n+j_n}$,  prove that the corresponding trajectories are almost independent and we apply Lemma \ref{lem: bmu}. For a large constant $L$ (its value will be specified below) we define
$$U = \{\g\in T_{n+j_n}:\; \g=\g|_{n+j_n-L}\underbrace{(1,...,1)}_\text{L times}\}. $$
This is the set of elements of $T_{n+j_n}$, whose last $L$ indices are 1's.
Note that $|U|=N^{n+j_n-L}$.
Then our aim is to prove that if we choose large $L$ and restrict our attention to elements of the set $U$,
  for some positive constants $d_i$

$$
\P\big[\tau_{n\rho_0}=n +j_n \big] \ge d_1 \P\bigg[\bigcup_{\g\in U} C_\g \bigg]\ge d_2 \sum_{\g\in U}\P[C_\g] \ge
\frac {d_3}{\sqrt n N^L} \cdot   e^{-\a_0 \rho_0 n } e^{- \frac{\rho_0^2 j_n^2}{2\sigma_0^2 n}  }.
$$
Since $L$ is fixed, the constant $d_3/N^L$, although very small, is strictly positive.
Thus,  $ \bigcup_{\g\in U} C_\g$ is not a subset of $\{\tau_{n\rho_0} =n+j_n\}$ nevertheless  one can efficiently compare probabilities of both sets. First,  we need to
modify further  the set   $C_\g$. The details are as follows.

For $\g\in U$ denote
$$W_{\gamma} = \{\gamma'\in \T_{\g|_{n+j_n-L}, L-1} \mbox{ and } \g' \mbox{ is not an ancestor of } \g\}.$$
Thus $W_\g$ is the set of all elements of $\T_{\g|_{n+j_n-L}, L-1}$ except those lying on the path between $\g|_{n+j_n-L}$ and $\g$. We
define  $$\wt{C}_{\gamma} = C_{\gamma} \cap \{  X_{\gamma'} < 0 \mbox{ for all } {\gamma' \in W_{\gamma}} \}.$$
Observe
$$  \{\tau_{n\rho_0}=n +j_n \}  \supset \bigg( \bigcup_{\g\in U} C_\g  \bigg) \cap \{ M_{T_{n+j_n-1}}\le n\rho_0 \}
  \supset \bigcup_{\g\in U} \Big( \wt C_\g   \cap \{ M_{T_{n+j_n-1}}\le n\rho_0 \}\Big)
$$ 
Our proof consists of two steps. First we prove that (see step 2A below)
\begin{equation}\label{eq:w2}
  \P \bigg[  \bigcup_{\gamma \in U}\wt{C}_{\gamma} \bigg] > \frac{\eps_1(L)}{\sqrt n } e^{-\a_0 \rho_0 n } e^{- \frac{\rho_0^2 j_n^2}{2\sigma_0^2 n}  } ,
\end{equation} for $\eps_1(L) = c p_L N^{-L}$
and next (see step 2B below)
\begin{equation}\label{eq:25}
  \P \bigg[  \bigg( \bigcup_{\gamma \in U}\wt{C}_{\gamma}\bigg) \cap \big\{ M_{T_{n+j_n-1}} >n\rho_0  \big\} \bigg] <  \frac{\eps_1(L)}{2\sqrt n } \cdot
  e^{-\a_0 \rho_0 n } e^{- \frac{\rho_0^2 j_n^2}{2\sigma_0^2 n}  }.
\end{equation}
Then combining \eqref{eq:w2} and \eqref{eq:25} we obtain
\begin{align*}
  \P\big[\tau_{n\rho_0} = n +j_n\big] & \ge   \P \bigg[  \bigg( \bigcup_{\gamma \in U}\wt{C}_{\gamma}\bigg) \cap \big\{ M_{T_{n+j_n-1}} \le n\rho_0  \big\} \bigg]\\
  &=   \P \bigg[ \bigcup_{\gamma \in U}\wt{C}_{\gamma} \bigg] -
    \P \bigg[  \bigg( \bigcup_{\gamma \in U}\wt{C}_{\gamma}\bigg) \cap \big\{ M_{T_{n+j_n-1}} >n\rho_0  \big\} \bigg]\\
    & > \frac{\eps_1(L)}{2\sqrt n} \cdot e^{-\a_0 \rho_0 n } e^{- \frac{\rho_0^2 j_n^2}{2\sigma_0^2 n}  }
\end{align*}

\noindent
{\sc Step 2A. Proof of \eqref{eq:w2}.} We observe that by the inclusion-exclusion formula
\begin{equation}\label{eq:41}
  \P \bigg[  \bigcup_{\gamma \in U}\wt{C}_{\gamma} \bigg]
  \ge \sum_{\g\in U} \P\big(\wt C_\g\big) - \sum_{\g,\g'\in U, \g\not=\g'}\P\big( \wt C_\g \cap \wt C_{\g'} \big).
\end{equation}
Since $S_\g$ and $X_{\g'}$ for $\g'\in W_\g$ are independent
$$
\P(\wt C_\g) = p_L\P(C_\g),
$$
for $$ p_L = \P \Big[  X_{\gamma'} < 0 \mbox{ for all } {\gamma' \in W_{\gamma}}  \Big] = \P \left[X_{1} < 0 \right]^{|W_{\gamma}|} > 0.$$
We prove that the sum of intersections in \eqref{eq:41} is of smaller order. For this purpose we group all $\g'\in U$ depending on the level of the first common ancestor with $\g$. Given $k\le n+j_n-L+1$ there are $N^{n+j_n-L-k}$ vertices $\g'\in U$ such that $|\g\wedge \g'|=k$. Therefore, applying the Markov inequality (with some $\beta$ such that
$\psi(\beta)<1$), we have
\begin{align*}
  \sum_{\g,\g'\in U,\g\not=\g'}&\P\big( \wt C_\g \cap \wt C_{\g'} \big)  = p_L \sum_{\g,\g'\in U, \g\not=\g'}\P\big( \wt C_\g \cap C_{\g'} \big) \\
&\leq p_L \sum_{\gamma \in U} \sum_{k=0}^{n+j_n - L - 1} \P\Big[ \wt C_\g \cap \big\{ S_{\g|_k} \le n\rho_0  \mbox{ and } S_{\g'}>n\rho_0 \mbox{ for some }\g'\in U \mbox{ with }|\g\wedge \g'|=k   \big\} \Big]\\
&\leq p_L \sum_{\gamma \in U} \sum_{k=0}^{n+j_n - L - 1} N^{n+j_n - L - k} \P\big[\wt{C}_{\gamma}\big]\P\left[ \S_{n+j_n - k} > 0 \right]\\
&\leq p_L \sum_{\gamma \in U} \P\big[\wt{C}_{\gamma}\big] N^{-L} \sum_{k=0}^{n +j_n - L - 1} \psi(\beta)^{n +j_n- k}\\
&\leq p_L C  \lambda(\beta)^{L} \sum_{\gamma \in U} \P\big[\wt{C}_{\gamma}\big]. 
\end{align*}
Therefore by Lemma \ref{lem: bmu}, for large $L$ such that $\eps_L = C p_L \lambda(\beta)^L < 1/2$,
\begin{align*}
    \P \bigg[  \bigcup_{\gamma \in U}\wt{C}_{\gamma} \bigg] &  \ge (1-\eps_L)  \sum_{\gamma \in U} \P \big[ \wt{C}_{\gamma} \big]\\
    & \ge \frac{p_L|U|}{2{\sqrt n N^{n+j_n}}}   e^{-\a_0 \rho_0 n } e^{- \frac{\rho_0^2 j_n^2}{2\sigma_0^2 n}  } \\
     &\ge \frac{\eps_1(L)}{\sqrt n} \cdot  e^{-\a_0 \rho_0 n } e^{- \frac{\rho_0^2 j_n^2}{2\sigma_0^2 n}  },
\end{align*}
which proves \eqref{eq:w2}.

\medskip

\noindent
{\sc Step 2B. Proof of  \eqref{eq:25}.}
Pick any $\a_{\text{min}} <\beta < \a_0$. Define $\eta_1 = \frac{\lambda(\beta)}{\lambda(\alpha_0)} = {\psi(\beta)} < 1$ and  $\eta_2 = \eta_1 e^{\delta(\a_0 - \beta)}$. Take $\delta$ such that $\eta_2 < 1$.
We define
\begin{equation}
\label{eq:02}
V_{\g,L} =  \{S_{\gamma|_k} \leq n\rho_0 - \delta (n+j_n  - k) \text{ for all } k \leq n +j_n - L \}
\end{equation}
for $\g\in U$ and estimate
\begin{equation}\label{eq:29}
\begin{split}
\P\bigg[\bigg(\bigcup_{\g\in U} \wt C_\g\bigg) & \cap \big\{ M_{T_{n+j_n-1}} >n\rho_0    \big\}
\bigg] \le N^{n+j_n-L} \P\big[ \wt C_\g \cap \big\{ M_{T_{n+j_n-1}} >n\rho_0   \big\} \big]\\
 &\le N^{n+j_n-L} \Big( \P\big[ \wt C_\g \cap V_{\g,L}^c \big] +
\P\big[ \wt C_\g \cap  V_{\g,L} \cap  \big\{ M_{T_{n+j_n-1}} >n\rho_0   \big\} \big]\Big).
\end{split}
\end{equation}
In the next steps we estimate separately the probabilities above.

\medskip
\noindent

{\sc Step 2B$_1$.} We prove that for any $L>0$
\begin{equation}\label{eq:26}
  \P\big[ \wt C_\g \cap V_{\g,L}^c \big]  = p_L \P\big[ C_\g \cap V_{\g,L}^c \big] \le \frac{ C p_L \kappa_1^L}{\sqrt n N^{n}}
   e^{-\a_0 \rho_0 n } e^{- \frac{\rho_0^2 j_n^2}{2\sigma_0^2 n}  }
\end{equation}
for some $\kappa_1<1$ and $C>0$. 

We write
\begin{align*}
  \P\big[ C_\g & \cap V_{\g,L}^c \big]   \le
\P\left[\S_{n+j_n} > u,\S_{k} > n\rho_0 - \delta (n +j_n - k ) \text{ for some } k \leq n +j_n - L  \right] \\
&\leq  \sum_{k=1}^{n +j_n - L} \P\left[\S_{n+j_n} > n\rho_0, \S_{k} > n\rho_0 - \delta (n +j_n - k ) \right]\\
&\leq \sum_{k=1}^{n +j_n - L} \sum_{m=0}^{\infty} \P\left[\S_{k} + \S_{n+j_n - k}^{n+j_n} > n\rho_0,
 m< \S_k -  n\rho_0 + \delta (n +j_n - k) \le m + 1 \right]\\
& \leq \sum_{k=1}^{n +j_n - L} \sum_{m=0}^{\infty}\P\left[\S_{k} > n\rho_0 - \delta (n +j_n - k ) + m \right] \P\left[\S_{n+j_n-k}^{n+j_n} > \delta (n +j_n - k) -(m + 1) \right]\\
& = \sum_{k=1}^{n+j_n - L} \sum_{m=0}^{\infty}\P\left[\S_{k} > n\rho_0- \delta (n+j_n - k ) + m \right] \P\left[\S_{n+j_n-k} > \delta (n+j_n - k) -(m + 1) \right].
\end{align*}


Fix $K > 0$ such that $\eta_2^{K \log n} < \frac 1{n^{1+d}}$ for $d = \frac{\rho_0^2 b}{2\sigma_0^2}$.  We divide the sum into two terms and study both of them independently.

Case 1: We first consider indices $k \leq n+j_n - K \log n$. By the  Markov  inequality
\begin{align*}
 \sum_{k = 1}^{n+j_n - K \log n}& \sum_{m=0}^{\infty}\P\left[\S_{k} > n\rho_0 - \delta (n +j_n\! -\! k) +m \right] \P\left[\S_{n+j_n-k} >\delta (n+j_n \! -\! k)-( m + 1)\right]\\
& \leq\!\! \sum_{k = 1}^{n+j_n - K \log n}\!\!\lambda(\a_0)^k e^{-\a_0 \rho_0 n} e^{\a_0 \delta(n+j_n - k)} \lambda(\beta)^{n+j_n -k} e^{-\beta \delta (n+j_n - k)} \sum_{m=0}^{\infty} e^{-\a_0 m}  e^{\beta (m+1)}\\
& \leq   \frac{ C e^{-{\a_0}\rho_0 n}}{N^{n+j_n}} \sum_{k = 1}^{n+j_n - K \log n} \eta_2^{n+j_n-k} \le \frac{C e^{-\a_0 \rho_0 n}}{N^{n+j_n}} \eta_2^{K \log n}\\
& = o\bigg( \frac{    e^{-\a_0 \rho_0 n }
}{\sqrt n N^{n+j_n}}\; e^{- \frac{\rho_0^2 j_n^2}{2\sigma_0^2 n}  } \bigg).
\end{align*}

Case 2: Consider $n+j_n - K \log n < k \leq n +j_n - L $. By Lemma \ref{lem:petrov2} and the Markov  inequality
\begin{align*}
& \!\!\!\!\!\!\sum_{k = n+j_n - K \log n+1}^{n+j_n - L} \sum_{m=0}^{\infty}\P\left[\S_{k} > \rho n_0  - \delta (n\!+j_n -\! k) +m \right] \P\left[\S_{n+j_n-k} >\delta (n\!+j_n -\! k)-( m + 1)\right]\\
& \le  C\!\!\!\!\!\!\!\!\!\!\!\!\!\!\sum_{k = n+j_n - K \log n+1}^{n+j_n - L} \sum_{m < n^{1/4}}\!\!\! \frac 1{\sqrt n N^k} e^{-\a_0 \rho_0 n} e^{\a_0 \d (n+j_n-k)} e^{-\a_0 m}
e^{-\frac{(\rho_0 (k-n) +\d (n+j_n-k) - m )^2
}{2\sigma_0^2 n}}\!\!\!  e^{-\beta \d (n+j_n-k)} e^{\beta m} \lambda(\beta)^{n+j_n - k}\\
& + C \sum_{k = n+j_n - K \log n+1}^{n+j_n - L} \sum_{m \ge n^{1/4}} \frac 1{ N^k} e^{-\a_0 \rho_0 n} e^{\a_0 \d (n+j_n-k)} e^{-\a_0 m}
\cdot e^{-\beta \d (n+j_n-k)} e^{\beta m} \lambda(\beta)^{n+j_n - k}\\
&\le \frac{ C e^{-\a_0 \rho_0 n}}{N^{n+j_n}} \sum_{k = n+j_n - K \log n+1}^{n+j_n - L} \eta_2^{n+j_n-k}
\bigg( \frac { e^{-\frac{\rho_0^2 j_n^2}{ 2\sigma_0^2 n}}
}{\sqrt n}  \sum_{m< n^{1/4}} e^{-(\a_0 - \beta)m} +  \sum_{m\ge n^{1/4}} e^{-(\a_0 - \beta)m}
\bigg)\\
&\le \frac{ C e^{-\a_0 \rho_0 n}}{N^{n+j_n}} \cdot \eta_2^L
\bigg( \frac { e^{-\frac{\rho_0^2 j_n^2}{ 2\sigma_0^2 n}}
}{\sqrt n} +  e^{-(\a_0-\beta)n^{1/4}} \bigg)
\le \frac{ C \eta_2^L} {\sqrt n N^{n+j_n}}         e^{-\a_0 \rho_0 n} e^{-\frac{\rho_0^2 j_n^2}{ 2\sigma_0^2 n}}.\\
\end{align*}
This proves \eqref{eq:26}.

\medskip

\noindent
{\sc Step 2B$_2$.} Now we are going to prove
\begin{equation}\label{eq:27}
  \P\big[ \wt C_\g \cap  V_{\g,L} \cap  \big\{ M_{T_{n+j_n-1}} >n\rho_0   \big\} \big] \le \frac{ C p_L \kappa_2^L}{\sqrt n N^{n+j_n}}   e^{-\a_0 \rho_0 n} e^{-\frac{\rho_0^2 j_n^2}{ 2\sigma_0^2 n}}
\end{equation}
We have
\begin{align*}
   &\P\big[  \wt C_\g \cap  V_{\g,L} \cap  \big\{ M_{T_{n+j_n-1}} >n\rho_0   \big\} \big]
   \le  \P\big[ \wt C_\g \cap  V_{\g,L} \cap  \big\{ S_{\g'}>n\rho_0  \mbox{ for some } |\g'\wedge \g|<n+j_n-L \big\} \big] \\
  &\le p_L\!\!\! \sum_{i=0}^{n+j_n-L}\sum_{k=1}^\8 \P\big[ C_\g \cap \big\{ C_{\g|_i < u- \d(n+j_n - i)}\mbox{ and } S_{\g|_i\g'}>\rho_0 n \mbox{ for some } |\g'|=k \mbox{ and }
  \big|\g\wedge(\g|_i)\g'\big| = i  \big\}
  \big]\\
& \leq p_L\sum_{i = 0}^{n+j_n - L} \sum_{k=1}^{\8} N^{k} \P\left[ C_{\gamma} \right] \P\left[ \S_{k} > \delta(n+j_n - i ) \right]\\
& \leq p_L\P\left[ C_{\gamma} \right] \sum_{i = 0}^{n+j_n - L} \sum_{k=1}^{\8} \psi(\beta)^{k} e^{-\beta \delta(n +j_n- i)}\\
& \leq Cp_L \P\left[ C_{\gamma} \right] \sum_{i = 0}^{n+j_n - L} e^{-\beta \delta(n+j_n - i)}\\
& \leq C p_L \big( e^{-\beta \d} \big)^L \P\left[ C_{\gamma} \right].
\end{align*}

\medskip

\noindent
{\sc Step 2B$_3$.} Estimates \eqref{eq:29}, \eqref{eq:26} and \eqref{eq:27} imply that for some $\kappa_3<1$
\begin{align*}
\P\bigg[ \bigcup_{\g\in U}\wt C_\g \cap \big\{ M_{T_{n+j_n-1}}>n\rho_0   \big\}
\bigg] &\le\frac{ Cp_L \kappa_3^L}{N^L \sqrt n}   e^{-\a_0 \rho_0 n} e^{-\frac{\rho_0^2 j_n^2}{ 2\sigma_0^2 n}}\\
& \frac{ C \kappa_3^L\eps_1(L)}{\sqrt n}
  e^{-\a_0 \rho_0 n} e^{-\frac{\rho_0^2 j_n^2}{ 2\sigma_0^2 n}}
  \end{align*}
 and choosing large $L$ such that $C\kappa_3^L < 1/2$ we conclude \eqref{eq:25} and complete proof of the Lemma.
\end{proof}

\subsection{Proof of Proposition \ref{prop: cz1}}

The proof of the asymptotic behavior relies again on the block decomposition. We focus here on the proof of \eqref{eq: ren44'}.
Fix large $L$  and consider the family of $N^{n+j_n-L}$ disjoint sets $\T_{\g,L}$ for $\gamma \in T_{n+j_n-L}$. Given such a $\g$ define
\begin{equation}\label{eq:bgl}
\begin{split}
B_{\g,L} &= \big\{ M_\g \le n\rho_0, M_{\T_{\g,L-1}}\le n\rho_0, M_{T_{\g,L}} >n\rho_0  \big\}\\
&= \{ \max\limits_{k \leq n+j_n - L} S_{\gamma|_k} \leq n\rho_0, \max\limits_{|\gamma'| \leq L - 1} S_{\gamma \gamma'} \leq n\rho_0, \max\limits_{|\gamma'| = L} S_{\gamma \gamma'} > n\rho_0\}
\end{split}
\end{equation}
This is the event that on the path from $o$ to $\g$ and in the subtree  $\T_{\g,L-1}$ the value $n\rho_0$ is not exceeded, but $S_{\g'}>n\rho_0  $ for some $\g'\in T_{\g,L}\subset T_{n+j_n}$.  The set $T_{\g,L}$ consists of $N^L$ elements and the corresponding paths from the root are dependent. The following Lemma is the main step of the proof.

\begin{lem}\label{prop:1}  There exists $L_0$ such that for any fixed $L>L_0$ and
  any $\g\in T_{n+j_n-L}$
$$ \P(B_{\g,L})
 \sim \frac{c C_L }{\sqrt n N^{n+j_n-L}}   e^{-\a_0 \rho_0 n} e^{-\frac{\rho_0^2 j_n^2}{ 2\sigma_0^2 n}}
$$ as $n\to \8$ uniformly for $|j_n| \le b \sqrt{n\log n}$,
 where $C_L =  \mathbb{E}\Big[\big( e^{\alpha \overline{M}_{L}} - e^{\alpha M_{L-1}} \big)_{+} \Big]$
\end{lem}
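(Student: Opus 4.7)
The key to the proof is the independence of the path from $o$ to $\g$ and the subtree rooted at $\g$, the latter being an independent copy of the BRW displaced by $S_\g$. Conditioning on the path,
$$
\P(B_{\g,L}) = \E\big[\mathbbm{1}_{\{M_\g \le n\rho_0\}}\,\Phi_L(n\rho_0 - S_\g)\big],
$$
where $\Phi_L(v) := \P[M_{\T_{o,L-1}} \le v,\,M_L>v]$ is the probability that an independent BRW first exceeds level $v$ at generation $L$. Since $\Phi_L(w)$ decays exponentially as $w\to+\infty$ and is negligible for $w$ very negative, the main contribution comes from $S_\g$ within $O(1)$ of the barrier $n\rho_0$.

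Next, I apply the exponential change of measure at parameter $\a_0$ to the path (of length $m := n+j_n-L$). Since $\psi(\a_0)=N\lambda(\a_0)=1$, the Radon--Nikodym factor is $\lambda(\a_0)^{-m} = N^m$, and under the tilt the increments have mean $\rho_0$ and variance $\sigma_0^2$. Writing $V_k := n\rho_0 - S_k$, the tilted walk $V$ has drift $-\rho_0$, starts from $V_0 = n\rho_0$, and the constraint $\{M_\g \le n\rho_0\}$ becomes $\{\min_{k\le m} V_k \ge 0\}$; this yields
$$
\P(B_{\g,L}) = N^{-m}\,e^{-\a_0 n\rho_0}\,\widetilde\E\big[e^{\a_0 V_m}\,\mathbbm{1}_{\{\min_k V_k \ge 0\}}\,\Phi_L(V_m)\big].
$$
A uniform local limit theorem with absorbing barrier for $V$, obtained by time-reversing to a walk with drift $+\rho_0$ starting at $w$ and ending at $n\rho_0$, yields
$$
\widetilde\P\big[V_m \in dw,\,\min_{k\le m} V_k \ge 0\big]\,\sim\,\frac{H(w)}{\sqrt{2\pi m\sigma_0^2}}\,\exp\!\Big(-\frac{\rho_0^2 j_n^2}{2\sigma_0^2 n}\Big)\,dw,
$$
uniformly in $|j_n|\le b\sqrt{n\log n}$ and in $w$ on compact subsets of $[0,\infty)$, where $H$ is the renewal-theoretic ``staying positive'' function for the $\a_0$-tilted walk. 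Combining this with the previous display, and controlling the tail of the $w$-integral via a Markov-inequality argument with some $\beta<\a_0$ (exactly as in Step 2B$_1$ of the proof of Lemma \ref{prop:2}), gives the claimed asymptotic with constant $c\int_0^\infty e^{\a_0 w}\Phi_L(w)H(w)\,dw$. A direct computation using $\Phi_L(w) = \P[\overline M_L>w]-\P[\overline M_{L-1}>w]$ together with renewal identities for the tilted walk identifies this constant with $c'\cdot C_L$, provided $L$ is sufficiently large that the identification is stable.

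The main obstacle is producing the uniform LLT with barrier. One must blend a Petrov-type Gaussian expansion (analogous to Lemma \ref{lem:petrov2}, maintaining uniformity in $|j_n|\le b\sqrt{n\log n}$) with the renewal factor $H(w)$ that arises from conditioning the walk to stay above zero, checking that the Gaussian and boundary error terms do not interfere. A secondary, more algebraic obstacle is matching the $w$-integral with $C_L$ in the stated form, which requires careful tracking of the distinction between $M_{L-1}$ (the $(L{-}1)$-th generation maximum) and $\overline M_{L-1}$ (the running maximum up to generation $L-1$); this difference is absorbed into the boundary behavior of $H$ near $w=0$. The threshold $L_0$ in the statement is dictated by the inclusion-exclusion corrections (cf.\ Step 2A of the proof of Lemma \ref{prop:2}), which need $L$ large enough to be negligible compared to the main term.
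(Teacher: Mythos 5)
Your decomposition $\P(B_{\g,L})=\E\big[\mathbbm{1}_{\{M_\g\le n\rho_0\}}\Phi_L(n\rho_0-S_\g)\big]$ and the tilting bookkeeping are correct, but your route is genuinely different from the paper's, and the difference is exactly where your two gaps sit. The paper never proves (or uses) a local limit theorem with an absorbing barrier in this lemma. Instead, Step~1 of its proof shows that the path constraint $\{M_\g\le n\rho_0\}$ can simply be \emph{dropped}: the event $\{M_\g>n\rho_0\}\cap\{M_{T_{\g,L}}>n\rho_0\}$ is a ``double exceedance'' which, by a first-passage decomposition and a Chernoff bound over the $\ge L$ intervening steps, costs at most $C\delta^L$ times the target normalization. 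After that, only the barrier-free Petrov estimate (Lemma \ref{lem:petrov2}) is needed, applied to $\P[n\rho_0-x<\S_{n+j_n-L}<n\rho_0-y]$ and integrated against the joint law of the subtree maxima $(\,\max_{|\g'|\le L-1}S_{\g'},\,\max_{|\g'|=L}S_{\g'}\,)$, after first disposing of the regimes $S_\g\le n\rho_0-n^{1/4}$ and $\max_{|\g'|=L}S_{\g'}\ge n^{1/4}$. Your ``main obstacle'' --- the uniform conditioned LLT with the renewal function $H$ --- is therefore not an obstacle the paper faces at all; in your write-up it is the entire technical content of the lemma and is only asserted. It is comparable in difficulty to Lemma \ref{lem: bmu} itself (indeed it is essentially a local, conditioned refinement of it), so leaving it as a black box leaves the proof incomplete.

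The second, more substantive issue is the constant. Your method, carried out correctly, yields $c\int_0^\8 e^{\a_0 w}\Phi_L(w)H(w)\,dw$, and this is \emph{not} equal to $c'C_L$: no renewal identity removes the factor $H(w)$, which records the genuinely positive probability that the tilted (positive-drift) path overshoots the level $n\rho_0$ before time $n+j_n-L$ when its endpoint is pinned at $n\rho_0-w$. What saves the day is quantitative, not algebraic: $1-H(w)$ is only non-negligible for $w=O(1)$, and there $e^{\a_0w}\Phi_L(w)\le e^{\a_0 w}\P[M_L>w]\le \psi(\beta)^Le^{-(\beta-\a_0)w}$, so $\int e^{\a_0w}\Phi_L(w)\big(1-H(w)\big)\,dw=O(\delta^L)$, while $C_L$ is not exponentially small in $L$. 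Thus your constant and the paper's agree only up to an additive $O(\delta^L)$ --- which happens to be the same precision to which the paper's own Step~1 determines the constant, and is why the lemma requires $L>L_0$ --- but your claim of an exact identification via ``renewal identities'' is wrong, and the remark that the $M_{L-1}$ versus $\ov M_{L-1}$ distinction is ``absorbed into the boundary behavior of $H$'' is not meaningful: that distinction lives entirely in the subtree functional $\Phi_L$ (the quantity appearing in the computation is $\E\big[(e^{\a_0 M_L}-e^{\a_0\ov M_{L-1}})_+\big]$, the bar belonging on the $(L-1)$-term), and has nothing to do with $H$. To repair your argument you would need either to prove the uniform barrier LLT and state the constant as $c\int_0^\8 e^{\a_0w}\Phi_L(w)H(w)\,dw$, or to follow the paper and first excise the barrier at an $O(\delta^L)$ cost.
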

\begin{proof} Observe first that for fixed
 $\g\in T_{n+j_n-L}$
we have
\begin{equation*}
\begin{split}
 \mathbb{P}\big[ M_{\T_{\g,L-1}}\le n\rho_0, &     M_{T_{\g,L}} > n\rho_0\big]\\
 & = \mathbb{P}\left[ M_{\g} > n\rho_0, M_{\T_{\g,L-1}}\le n\rho_0, M_{T_{\g,L}} >  n\rho_0 \right]  \\
 & \quad + \mathbb{P}\left[ M_\g \le n\rho_0, M_{\T_{\g,L-1}}\le n\rho_0, M_{T_{\g,L}} > n\rho_0 \right] \\
  & = \mathbb{P}\left[ M_\g > n\rho_0, M_{\T_{\g,L-1}}\le n\rho_0, M_{T_{\g,L}} >  n\rho_0 \right] + \mathbb{P}\left[B_{\gamma, L} \right].
\end{split}
\end{equation*}

\noindent
{\sc Step 1.} Notice that
\begin{equation}\label{eq:ren23}
\mathbb{P}\left[ M_\g > n\rho_0, M_{T_{\g,L}} >  n\rho_0 \right]  \le  \frac{ C(\alpha, \beta) \delta^{L}} {\sqrt{n} N^{n+j_n-L}}
  e^{-\a_0 \rho_0 n} e^{-\frac{\rho_0^2 j_n^2}{ 2\sigma_0^2 n}},
\end{equation}
where $\delta = \psi(\beta) < 1$ provided $\beta < \alpha_0$. Indeed, by  Lemma 3.3 in \cite{BM} (appropriately modified), for  some fixed $|\gamma_0'| = L$ we have
\begin{align*}
\mathbb{P}\left[ M_\g > n\rho_0, M_{T_{\g,L}} >  n\rho_0 \right]  &= \mathbb{P}\left[\max\limits_{ k \leq n+j_n - L } S_{\gamma |_k} > n\rho_0, \max\limits_{|\gamma'| = L } S_{\gamma \gamma'} > n\rho_0  \right]\\
 & \leq N^{L} \mathbb{P}\left[ \max\limits_{ k\leq n+j_n - L } S_{\gamma |_k} > n\rho_0 , S_{\gamma \gamma_0'} > n\rho_0  \right]\\
&  =\frac{ C \delta^L}{\sqrt{n} N^{n-L}}   e^{-\a_0 \rho_0 n} e^{-\frac{\rho_0^2 j_n^2}{ 2\sigma_0^2 n}}.
\end{align*}
which proves \eqref{eq:ren23}.

\medskip

\noindent
{\sc Step 2.}
Thus, in order to prove the Proposition, it is sufficient to show that for some large fixed $L$
\begin{equation*}
 \mathbb{P}\left[ M_{\T_{\g,L-1}} \le n\rho_0, M_{T_{\g,L}} >n\rho_0  \right] \sim \frac{C C_L }{\sqrt n N^{n+j_n-L}}   e^{-\a_0 \rho_0 n} e^{-\frac{\rho_0^2 j_n^2}{ 2\sigma_0^2 n}}
\end{equation*}
 We write
\begin{equation}\label{eq:30}
\begin{split}
 \mathbb{P}\big[ M_{\T_{\g,L-1}}&\le n\rho_0, M_{T_{\g,L}} >n\rho_0 \big]=
   \mathbb{P}\left[S_{\gamma} \leq n\rho_0 -n^{\frac{1}{4}}, M_{\T_{\g,L-1}} \leq n\rho_0, M_{T_{\g,L}} >n\rho_0\right]\\
 & + \mathbb{P}\Big[n\rho_0 -n^{\frac{1}{4}} < S_{\gamma} < n\rho_0 , M_{\T_{\g,L-1}} \leq n\rho_0, M_{T_{\g,L}} >n\rho_0, \max\limits_{|\gamma'| = L} S_{ \gamma'} \ge  n^{\frac 14} \Big]\\
 &+\mathbb{P}\Big[n\rho_0 -n^{\frac{1}{4}} < S_{\gamma} < n\rho_0, M_{\T_{\g,L-1}}\leq n\rho_0, M_{T_{\g,L}} >n\rho_0, \max\limits_{|\gamma'| = L} S_{ \gamma'} < n^{\frac 14} \Big]
\end{split}
\end{equation}
We prove that the first two terms are negligible and one needs to consider the asymptotic behavior of the third one.

To estimate the first summand fix $\beta > \a$ and observe that by Markov's inequality with functions $e^{\alpha_0 x}$ and $e^{\beta x}$ for fixed $|\gamma'| = L$ we have
\begin{equation}\label{eq:5.4}
\begin{split}
\mathbb{P}[S_{\gamma} \leq n\rho_0 -n^{\frac{1}{4}}&, M_{T_{\g,L}} >n\rho_0]\\
 &\leq N^L \sum_{m \geq 0} \mathbb{P}\left[ -(m+1) < \S_{n+j_n-L} - (n\rho_0 -n^{\frac{1}{4}})\leq  -m, \S_{n+j_n} >n\rho_0 \right]\\
 & \leq N^L \sum_{m \geq 0} \mathbb{P}\left[\S_{n+j_n-L}  > n\rho_0 -n^{\frac{1}{4}} -(m+1)\right] \mathbb{P}\left[\S_L > n^{\frac{1}{4}}+ m \right] \\
 & \leq N^L \sum_{m \geq 0}  \lambda(\alpha_0)^{n+j_n  - L}  e^{-\alpha_0\rho_0 n}  e^{\alpha_0 n^{\frac{1}{4}}} e^{\alpha_0(m+1)} \lambda(\beta)^{L} e^{-\beta n^{\frac{1}{4}}} e^{-\beta m } \\
 & = e^{-{\alpha_0}\rho_0 n} N^{L-n-j_n}  e^{(\alpha_0 - \beta)n^{\frac{1}{4}}} \lambda(\alpha_0)^{ - L}  \lambda(\beta)^{L} \sum_{m \geq 0} e^{\alpha_0(m+1)} e^{-\beta m }\\ &= o\bigg( \frac{1}{\sqrt{n} N^{n+j_n - L}}   e^{-\a_0 \rho_0 n} e^{-\frac{\rho_0^2 j_n^2}{ 2\sigma_0^2 n}} \bigg).
\end{split}
\end{equation}
The same argument proves
\begin{equation}\label{eq:5.5}
\P\big[ S_{\gamma} > n\rho_0-n^{\frac 14}, \max\limits_{|\gamma'| = L} S_{ \gamma'} > n^{\frac 14}\big] = o\bigg( \frac{1}{\sqrt{n} N^{n+j_n - L}}   e^{-\a_0 \rho_0 n} e^{-\frac{\rho_0^2 j_n^2}{ 2\sigma_0^2 n}}
 \bigg).
\end{equation}
Finally we write \begin{equation}\label{eq8}
\begin{split}
 &\mathbb{P}\Big[n\rho_0-n^{\frac{1}{4}} < S_{\gamma} < n\rho_0 , M_{\T_{\g,L-1}} \leq n\rho_0, M_{T_{\g,L}} >n\rho_0, \max\limits_{|\gamma'| = L} S_{ \gamma'} < n^{\frac 14} \Big] \\
 &=  \int_{0\le y\le x < n^{\frac 14}}  \mathbb{P}\left[n\rho_0 - x < \S_{n+j_n-L} < n\rho_0  - y \right]  \mathbb{P} \left[\max\limits_{|\gamma'| \leq L -1} S_{ \gamma'} \in dy, \max\limits_{|\gamma'| = L} S_{\gamma'} \in dx  \right].
\end{split}
\end{equation}
Now we apply Lemma \ref{lem:petrov2}
$$    \mathbb{P}\left[\S_{n+j_n-L} \geq n\rho_0 - y  \right]  \sim
C(\rho_0) e^{-\a_0 \rho_0 n} N^{-(n+j_n-L)} e^{\a_0 y} e^{-\frac{\rho_0^2 j_n^2}{ 2\sigma_0^2 n}}.
$$
Analogously
$$    \mathbb{P}\left[\S_{n+j_n-L} \geq n\rho_0 - x  \right]  \sim
C(\rho_0) e^{-\a_0 \rho_0 n} N^{-(n+j_n-L)} e^{\a_0 x} e^{-\frac{\rho_0^2 j_n^2}{ 2\sigma_0^2 n}}.
$$
Back to \eqref{eq8} we end up with
\begin{multline*}
 \mathbb{P}\Big[n\rho_0 -n^{\frac{1}{4}} < S_{\gamma} < n\rho_0, S_{\gamma } + \max\limits_{|\gamma'| \leq L -1} S_{ \gamma'} \leq n\rho_0, \max\limits_{|\gamma'| = L} S_{\gamma \gamma'} >n\rho_0, \max\limits_{|\gamma'| = L} S_{ \gamma'} < n^{\frac 14} \Big]\\
 \sim \frac{C(\rho_0)}{\sqrt{n} N^{n +j_n - L}}  e^{-\a_0 \rho_0 n} e^{-\frac{\rho_0^2 j_n^2}{ 2\sigma_0^2 n}} \mathbb{E}\left[\left( e^{\alpha \overline{M}_{L}} - e^{\alpha M_{L-1}} \right)_{+} \right].
\end{multline*}
Note that by the moment assumptions the expectation above  is finite, hence we conclude the Lemma.
\end{proof}

\begin{proof}[{Proof of Proposition \ref{prop: cz1} - formula \eqref{eq: ren44'}}]
 We apply Lemma \ref{prop:1} and proceed similarly as in the proof of Lemma \ref{prop:2} i.e. we divide all the elements of $T_{n+j_n}$ into disjoint sets and use strongly that
 $$
\{ \tau_{n\rho_0} = n +j_n \} \subset \bigcup_{\g \in T_{n+j_n-L}} B_{\g, L},
$$   for $B_{\g,L}$  defined in \eqref{eq:bgl}.

   We will prove that for any $\eps>0$ there
  is a constant $C_L$ depending on the parameter $L$, that will be specified below, and such that
  \begin{equation}\label{eq:31}
\begin{split}
    C_L-\eps & \le \liminf_{n\to\8} \P[\tau_{n\rho_0} = n+j_n] \cdot \sqrt n  e^{\a_0 \rho_0 n} e^{\frac{\rho_0^2 j_n^2}{ 2\sigma_0^2 n}}\\
    &\le \limsup_{n\to\8} \P[\tau_{n\rho_0} = n+j_n] \cdot \sqrt n   e^{\a_0 \rho_0 n} e^{\frac{\rho_0^2 j_n^2}{ 2\sigma_0^2 n}}
    \le C_L + \eps.
\end{split}
  \end{equation}
Then, passing with $\eps$ to 0 and applying Lemma \ref{prop:2}, we obtain
$$ \lim_{n\to\8} \P[\tau_{n\rho_0} = n+j_n] \cdot \sqrt n  e^{\a_0 \rho_0 n} e^{\frac{\rho_0^2 j_n^2}{ 2\sigma_0^2 n}} = C,
$$ for some $C\in (0,\8)$.

Our proof consists of two steps. First we prove that
\begin{equation}\label{eq:833}
  \bigg| \P\bigg[ \bigcup_{\g\in T_{n+j_n-L}} B_{\g,L} \bigg] - \sum_{\g\in T_{n+j_n-L}} \P(B_{\g,L})
   \bigg| \le \frac{\eps}{2\sqrt n}  e^{-\a_0 \rho_0 n} e^{-\frac{\rho_0^2 j_n^2}{ 2\sigma_0^2 n}}
\end{equation} which in view of Lemma \ref{prop:1} entails
\begin{equation*}
\begin{split}
    c_L-\eps/2 & \le \liminf_{n\to\8} \P\bigg[\bigcup_{\g\in T_{n+j_n-1}} B_{\g,L}\bigg] \cdot \sqrt n  e^{\a_0 \rho_0 n} e^{\frac{\rho_0^2 j_n^2}{ 2\sigma_0^2 n}}\\
    &\le \limsup_{n\to\8} \P\bigg[ \bigcup_{\g\in T_{n+j_n-1}} B_{\g,L} \bigg]  \cdot \sqrt n  e^{\a_0 \rho_0 n} e^{\frac{\rho_0^2 j_n^2}{ 2\sigma_0^2 n}}
    \le C_L + \eps/2.
\end{split}
\end{equation*}
Next we show
\begin{equation}\label{eq:34}
  \P\bigg[  \bigcup_{\g\in T_{n+j_n-1}} B_{\g,L} \setminus \{\tau_{n\rho_0} = n+j_n\} \bigg] \le  \frac{\eps}{2\sqrt n} e^{-\a_0 \rho_0 n} e^{-\frac{\rho_0^2 j_n^2}{ 2\sigma_0^2 n}}.
\end{equation}
Thus, \eqref{eq:833} and \eqref{eq:34} imply \eqref{eq:31} and the main result.

\medskip

\noindent
{\sc Step 1. Proof of \eqref{eq:833}.} By the inclusion-exclusion formula
\begin{equation*}
\begin{split}
\Bigg | \P \bigg[ \bigcup_{\gamma\in T_{n +j_n- L} } B_{\gamma,L} \bigg] - \sum_{\gamma\in T_{n +j_n- L}} \P \big[ B_{\gamma,L} \big] \Bigg | \leq \sum_{\gamma,\gamma'\in T_{n+j_n - L}, \g\not=\g'} \P \left[ B_{\gamma,L} \cap B_{\gamma',L} \right].
\end{split}
\end{equation*}
Choose $\beta$ such that $\psi(\beta)<1$, then the Markov inequality entails
\begin{align*}
  \sum_{\gamma,\gamma'\in T_{n+j_n - L}, \g\not=\g'} & \P  \left[ B_{\gamma,L} \cap B_{\gamma',L} \right]
  = \!\!\!\sum_{\g\in T_{n+j_n-L}} \!\!\! \sum_{k=0}^{n+j_n-L-1} \!\!\! \sum_{\{ \g'\in T_{n+j_n-L}:\; |\g\wedge \g'| = k \}}\!\!\! \P \left[ B_{\gamma,L} \cap B_{\gamma',L} \right]\\
& \le \sum_{\g\in T_{n+j_n-L}} \sum_{k=0}^{n+j_n-L-1} \sum_{\{ \g'\in T_{n+j_n-L}:\;| \g\wedge \g'| = k \}} \P \Big[ B_{\gamma,L} \cap \big\{
S_{\g'|_k}\le n\rho_0 \mbox{ and } M_{T_{\g',L}}>n\rho_0 \big\} \Big]\\
& \le \sum_{\g\in T_{n+j_n-L}} \sum_{k=0}^{n+j_n-L-1} N^{n+j_n-L-k}N^L \P[ B_{\gamma,L}] \P\big[ \S_{n+j_n-k} > 0 \big]\\
& \le \sum_{\g\in T_{n+j_n-L}}   \P[ B_{\gamma,L}] \cdot      \sum_{k=0}^{n+j_n-L-1} \psi(\beta)^{n+j_n-k}\\
& \le C \psi(\beta)^L  \cdot  \sum_{\g\in T_{n+j_n-L}}   \P[ B_{\gamma,L}]
\end{align*} which proves \eqref{eq:833}.

\medskip

\noindent
{\sc Step 2. Proof of \eqref{eq:34}.} We proceed as in the proof of Lemma \ref{prop:2}. Recall the definition of $V_{\g,L}$ given in \eqref{eq:02}. Then for some $\g\in T_{n+j_n-L}$
\begin{align*}
    \P\bigg[  \bigcup_{\g\in T_{n+j_n-L}} B_{\g,L} \setminus \{\tau_{n\rho_0}& = n +j_n\} \bigg]
  =     \P\bigg[  \bigcup_{\g\in T_{n+j_n-L}} B_{\g,L} \cap \{M_{n+j_n-1}>n\rho_0\} \bigg]\\
 & =   N^{n+j_n-L}   \P\big[ B_{\g,L} \cap  \{M_{n+j_n-1}>n\rho_0\} \bigg]\\
 & \le  N^{n+j_n-L} \Big(  \P\big[ B_{\g,L} \cap  V^c_{\g,L} \big] +
   \P\big[ B_{\g,L} \cap V_{\g,L} \cap  \{M_{n+j_n-1}>n\rho_0\} \big]\Big).\\
\end{align*}
Reasoning as in the proof of  Lemma \ref{prop:2} (see the proof of \eqref{eq:26} in step 2B$_1$) one can prove
\begin{equation}\label{eq:38}
  \P\big[ B_{\g,L} \cap  V^c_{\g,L} \big] \le C \kappa_1^L \P\big[ B_{\g,L} \big]
\end{equation}
and (see the proof of \eqref{eq:27} in step 2B$_2$)
\begin{equation}\label{eq:39}
  \P\big[ B_{\g,L} \cap V_{\g,L} \cap  \{M_{n+j_n-1}>u\} \big] \le C \kappa_2^L \P\big[ B_{\g,L} \big].
\end{equation}
Combining \eqref{eq:38} with \eqref{eq:39} we obtain \eqref{eq:34}.
\end{proof}

\section{Proofs of Theorems \ref{thm: tuclt} and \ref{thm: tu-ld} }

\subsection{Law of large numbers for $\tau_u$}
The law of large numbers follows essentially from the following Lemma
\begin{lem}\label{lem: pt6}
Assume that $\Psi(\a_0 + \eps)<\8$ for some $\eps>0$. Let $n_1 = n_u - b\sqrt{n_u\log n_u}$ and $n_2 = n_u + b\sqrt{n_u\log n_u}$ for $n_u = u/\rho_0$. Then for any $\delta>0$ one can choose large $b>0$ such that
\begin{equation}\label{eq:ren61}
  \P[\tau_u \le n_1] \le C e^{-\a_0 u} u^{-\d}
\end{equation}
and
\begin{equation}\label{eq:ren62}
  \P[\tau_u \ge n_2] \le C e^{-\a_0 u} u^{-\d}
\end{equation}
\end{lem}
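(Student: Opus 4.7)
The plan is to reduce both inequalities to Chernoff moderate-deviation bounds on the many-to-one random walk, exploiting the identity $\psi(\a_0)=1$. The starting point is $\P[\tau_u = k] \le \E[\#\{|\g|=k:\, S_\g>u\}] = N^k\,\P[\S_k > u]$, which combined with Chernoff $\P[\S_k>u]\le \lambda(\a_0+s)^k e^{-(\a_0+s)u}$ and $N\lambda(\a_0)=1$ yields, after Taylor expansion $\Psi(\a_0+s)=s\rho_0+\tfrac{s^2\sigma_0^2}{2}+O(s^3)$,
\[
\P[\tau_u = k] \le e^{-\a_0 u}\,\exp\!\Big\{-s(u-k\rho_0) + \tfrac{ks^2\sigma_0^2}{2} + O(ks^3)\Big\}.
\]
The hypothesis $\Psi(\a_0+\eps)<\8$, together with the fact that $\a_0$ lies past the minimiser $\hat\a$ of $\Psi$ (since $\rho_0=\Psi'(\a_0)>0$), ensures that both signs of small $s$ are admissible.

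For \eqref{eq:ren61}, the inclusion $\{\tau_u\le n_1\}\subset\{M_{n_1}>u\}$ lets me apply the display with $k=n_1$ and $s>0$. Since $u-n_1\rho_0 = b\rho_0\sqrt{n_u\log n_u}$, the optimal tilt is $s_* \asymp b\sqrt{\log n_u/n_u}$, giving the exponent $-\tfrac{b^2\rho_0^2}{2\sigma_0^2}\log n_u + O(1)$, and hence
\[
\P[\tau_u\le n_1] \le C\, e^{-\a_0 u}\, n_u^{-b^2\rho_0^2/(2\sigma_0^2)}.
\]
Choosing $b$ with $b^2\rho_0^2/(2\sigma_0^2)\ge\d$ proves \eqref{eq:ren61}.

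For \eqref{eq:ren62}, the estimate is to be read on $\{\tau_u<\8\}$, so I write $\P[n_2\le\tau_u<\8] = \sum_{k\ge n_2}\P[\tau_u=k]$ and apply the same Chernoff bound with $s<0$. Since $\a_0$ is the rightmost positive root of $\psi=1$, for small $|s|$ one has $\psi(\a_0+s)<1$, and the tail sums geometrically:
\[
\sum_{k\ge n_2}\P[\tau_u=k] \le \frac{\psi(\a_0+s)^{n_2}\,e^{-(\a_0+s)u}}{1-\psi(\a_0+s)}.
\]
Optimising $|s_*|\asymp b\sqrt{\log n_u/n_u}$ produces the same $n_u^{-b^2\rho_0^2/(2\sigma_0^2)}$ saving, while the denominator contributes only a polynomial factor of order $\sqrt{n_u/\log n_u}$, absorbed by slightly increasing $b$.

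The only delicate point is verifying that the optimal tilts $s_*$ lie in the admissible window $(\hat\a-\a_0,\,\a_\8-\a_0)$; since $|s_*|\to 0$ as $u\to\8$, this is automatic in the asymptotic regime of interest. Everything else is routine Taylor expansion of $\Psi$ around $\a_0$, so the real conceptual content is just the many-to-one $+$ Chernoff reduction in the first paragraph.
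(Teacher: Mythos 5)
Your proposal is correct and follows essentially the same route as the paper: a first-moment (many-to-one) bound $\P[\tau_u=k]\le N^k\,\P[\S_k>u]$ combined with a Chernoff tilt of size $|s|\asymp\sqrt{\log n_u/n_u}$ away from $\a_0$, using $\Psi(\a_0)=0$ and a second-order Taylor expansion of $\Psi$; you also correctly read \eqref{eq:ren62} as $\P[n_2\le\tau_u<\8]$, which is what the paper implicitly intends. One small slip: the inclusion $\{\tau_u\le n_1\}\subset\{M_{n_1}>u\}$ is false (a particle exceeding $u$ at some generation $k<n_1$ need not have a descendant above $u$ at generation $n_1$); the correct move, which your own first display already supports, is to sum $\P[\tau_u=k]$ over all $k\le n_1$, and since $\psi(\a_0+s)>1$ for $s>0$ this geometric sum is dominated by the $k=n_1$ term up to a polynomial factor that is absorbed by enlarging $b$.
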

\begin{proof}
  Taking $\gamma$ some fixed elements of $T_{n_1}$ and applying the Taylor expansion
  $$
  \Psi(\a_0+\eps) + \rho\eps + \frac{\eps^2}2\psi''(s)
  $$ for $s\in[\a_0,\a_0+\eps]$,
  we obtain
\begin{align*}
	\P \left[M_{\gamma} > u\right] \leq & \sum_{k=1}^{|\gamma|} \P \left[S_k > u\right] \leq  \sum_{k=1}^{|\gamma|}\lambda(\a_0 + \epsilon)^{k} e^{-(\a_0 + \epsilon)}\\
&\leq C(\epsilon)N^{-|\gamma|} \psi(\a_0 + \epsilon)^{|\gamma|} e^{-(\a_0 + \epsilon)u}\\
&\leq C(\epsilon)N^{-|\gamma|} e^{ |\g|( \eps \rho_0 + c \eps^2) } e^{-(\a_0 + \epsilon)u}.
\end{align*}
  Therefore choosing $\epsilon=\sqrt{\frac{\log n_u}{n_u}}$  we have
  \begin{align*}
    \P[\tau_u\le n_1] & \le  N^{n_1} \P[M_\g > u]\\
    &\le C e^{n_1 (\eps \rho_0 + c \eps^2)} e^{-(\a_0 + \eps)u}\\
    &\le C e^{(n_u - b\sqrt{n_u\log n_u})(\eps\rho_0 + c \eps^2)}e^{-(\a_0 + \eps)u}\\
    &\le C e^{(\eps^2 - b\eps \rho_0)\log n_u} e^{-\a_0 u}\\
    &\le C u^{-\d} e^{-\a_0 u}\\
  \end{align*}
for appropriate large $b$.

The proof of \eqref{eq:ren62} is similar, but instead of $M_\g$ one has to consider  $\{S_{\gamma|k}>u \mbox{ for some } k>n_2\}$ for some infinite path $\gamma$.
\end{proof}

\begin{proof}[Proof of Theorem \ref{thm: tuclt} - LLN]
The result is a direct consequence of even stronger result, namely using \eqref{eq: jo} and Lemma \ref{lem: pt6} we see that for any $\eps>0$ and large $u$
\begin{align*}
  \P\bigg[ \bigg|
 \frac{\tau_u}{u} - \rho_0 \bigg|>\eps  \bigg| \tau_u<\8 \bigg]
 & \le   \P\big[
\tau_u < u(\rho_0 - \eps)\big| \tau_u<\8 \big] +  \P\big[
\tau_u > u(\rho_0 + \eps)\big| \tau_u<\8 \big] \\
 & \le   \P\big[
\tau_u < n_1 \big| \tau_u<\8 \big] +  \P\big[
\tau_u > n_2 \big| \tau_u<\8 \big] \\ &\le C u^{-\d}.
\end{align*}
\end{proof}

\subsection{Central limit theorem for $\tau_u$}
\begin{proof}[Proof of Theorem \ref{thm: tuclt} - CLT]
  Fix $y\in \R$ and choose $b$ as in Lemma \eqref{lem: pt6}. Let $n=u/\rho_0$. Then applying Lemma \ref{lem: pt6}
  \begin{align*}
    \P\bigg[
   \frac{\tau_u - u/\rho_0}{\sigma_0 \rho_0^{-3/2} \sqrt u} \le y
     \bigg] &\sim \P\big[ n - b\sqrt{n\log n} < \tau_{\rho_0 n} \le n + \sigma_0 \rho_0^{-1} \sqrt n y
     \big]\\
     &= \sum_{-b\sqrt{n\log n} < j \le \sigma_0 \rho_0^{-1}\sqrt n y} \P\big[ \tau_{\rho_0 n} = n+j \big]\\
     &\sim C e^{-\a_0 \rho_0 n}  \sum_{-b\sqrt{n\log n} < j \le \sigma_0 \rho_0^{-1}\sqrt n y}  \frac 1 {\sqrt n} e^{-\frac{\rho_0^2}{2\sigma_0^2}\big(\frac{j}{\sqrt n}\big)^2}
  \end{align*}
  Observe that the last expression is just the Riemann sum, thus
  $$
    \P\bigg[
   \frac{\tau_u - u/\rho_0}{\sigma_0 \rho_0^{-3/2} \sqrt u} \le y
     \bigg]  \sim C e^{-\a_0 \rho_0 n} \int_{-\8}^{\frac{\sigma_0 y}{\rho_0}}
     e^{-\frac 12 \big(\frac{\rho_0 s}{\sigma_0 }\big)^2} ds = C'  e^{-\a_0 u} \Phi(y).
  $$
\end{proof}

\subsection{Large deviations of $\tau_u$}

\begin{lem}\label{lem: ren45}
If $\rho > \rho_0$, then
\begin{equation}\label{eq: ren46}
  \P\big[ \ov M_{n_u - D \log n_u} > u \big]=
  \P[M_k > u \mbox{ for some } k \le n_u - D \log n_u]
   = o\bigg( \frac{1}{\sqrt u} e^{- \frac{\Psi^*(\rho)}{\rho} u} \bigg).
\end{equation}
for appropriately large constant $D$.

If $\rho < \rho_0$, then
\begin{equation}\label{eq: ren47}
  \P[M_k > u \mbox{ for some } k > n_u + D \log n_u]
 = o\bigg( \frac{1}{\sqrt u}e^{- \frac{\Psi^*(\rho)}{\rho} u} \bigg).
\end{equation}
\end{lem}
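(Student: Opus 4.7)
The plan is to handle both parts of the lemma by a single generation-by-generation Chernoff estimate combined with a union bound and a geometric sum. Let $\alpha$ be the parameter with $\Psi'(\alpha) = \rho$. The many-to-one bound together with an exponential Markov inequality gives, for every $k$,
$$ \P[M_k > u] \le N^k \P[\mathbb{S}_k > u] \le N^k \lambda(\alpha)^k e^{-\alpha u} = e^{k\Psi(\alpha) - \alpha u}. $$
At $k = n_u = u/\rho$ the identity \eqref{eq:ren24}, i.e.\ $\Psi^*(\rho) = \rho\alpha - \Psi(\alpha)$, turns this exponent into $-u\Psi^*(\rho)/\rho$, and shifting to $k = n_u + j$ yields $-u\Psi^*(\rho)/\rho + j\Psi(\alpha)$. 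So as $k$ moves away from the typical value $n_u$, the bound changes geometrically at ratio $\psi(\alpha) = e^{\Psi(\alpha)}$, with the sign of $\Psi(\alpha)$ determining the direction of decay.

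For \eqref{eq: ren46} the hypothesis $\rho > \rho_0$ forces $\alpha > \alpha_0$, so $\psi(\alpha) > 1$ and $\Psi(\alpha) > 0$. A union bound gives
$$ \P[\ov M_{n_u - D\log n_u} > u] \le \sum_{k=1}^{n_u - D\log n_u} e^{k\Psi(\alpha) - \alpha u} \le C\, e^{-u\Psi^*(\rho)/\rho}\, n_u^{-D\Psi(\alpha)}, $$
because the increasing geometric sum is controlled by a constant times its last term. Picking $D$ with $D\Psi(\alpha) > 1/2$ turns $n_u^{-D\Psi(\alpha)}$ into an $o(u^{-1/2})$ factor, yielding \eqref{eq: ren46}.

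For \eqref{eq: ren47} the situation is dual: $\rho < \rho_0$, hence $\alpha < \alpha_0$ and $\Psi(\alpha) < 0$. The exponent $k\Psi(\alpha) - \alpha u$ is now decreasing in $k$, so the analogous union bound gives
$$ \P[M_k > u \text{ for some } k > n_u + D\log n_u] \le \sum_{k > n_u + D\log n_u} e^{k\Psi(\alpha) - \alpha u} \le C\, e^{-u\Psi^*(\rho)/\rho}\, n_u^{-D|\Psi(\alpha)|}, $$
and again choosing $D$ so large that $D|\Psi(\alpha)| > 1/2$ produces the desired $o(u^{-1/2})$ correction.

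There is no serious obstacle; the argument is essentially a two-line Chernoff bound. The only point worth emphasising is that selecting $\alpha$ with $\Psi'(\alpha) = \rho$ is precisely what converts the crude estimate $e^{k\Psi(\alpha) - \alpha u}$ into one with the sharp rate $\Psi^*(\rho)/\rho$ at the typical time $k = u/\rho$; convexity of $\Psi$ then ensures a fixed exponential cost $|\Psi(\alpha)|$ per generation of displacement, so the logarithmic margin $D\log n_u$ can always be tuned to beat the $u^{-1/2}$ benchmark.
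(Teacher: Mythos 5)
Your argument is correct and is essentially the paper's: a union bound over generations, the first-moment estimate $\P[M_k>u]\le N^k\P[\S_k>u]$, a Chernoff bound at the tilt $\a$ with $\Psi'(\a)=\rho$, and a geometric summation whose ratio $\psi(\a)^{\pm 1}\ne 1$ produces the $n_u^{-D|\Psi(\a)|}$ gain. The only difference is cosmetic: the paper tilts at $\a+\eps$ with $\eps=1/\sqrt u$ and Taylor-expands $\Psi$ (a device genuinely needed only in the critical case $\Psi(\a_0)=0$ of Lemma \ref{lem: pt6}), whereas you exploit $\Psi(\a)\ne 0$ directly, which is cleaner.
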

\begin{proof}
  We prove only the first part of the Lemma i.e. \eqref{eq: ren46}, the second one requires similar arguments. By the Markov inequality for $\eps>0$ we have
  \begin{align*}
    \P\big[ \ov M_{n_u - D \log n_u} > u \big] & \le  \sum_{k=1}^{n_u-D\log n_u} \P[M_k > u]
    \le \sum_{k=1}^{n_u-D\log n_u}  \sum_{\g\in T_k} \P[S_{\g} > u]\\
    &\le \sum_{k=1}^{n_u-D\log n_u} N^k   \P[\S_{k} > u]
 \le \sum_{k=1}^{n_u-D\log n_u} N^k \lambda(\a+\eps)^k e^{-(\a+\eps)u} \\
 &\le \sum_{k=1}^{n_u-D\log n_u} e^{\Psi(\a+\eps)k} k e^{-(\a+\eps)u}.
  \end{align*}
  Now we expand $\Psi$ into a Taylor series
  $$
  \Psi(\a+\eps) = \Psi(\a) + \rho \eps + \frac{\eps^2}2 \Psi''(s)
  $$ for some $s\in[\a+\a+\eps]$ and choose $\eps = 1/\sqrt u$. Then
 \begin{align*}
    \P\big[ \ov M_{n_u - D \log n_u} > u \big]
 &\le  n_u    e^{(\Psi(\a) + \rho \eps + c\eps^2)({n_u-D\log n_u})}  e^{-(\a+\eps)u}\\
 &\le C e^{-\a u} \cdot n_u^{1-D \Psi(\a)} = o\bigg( \frac{1}{\sqrt u} e^{- \frac{\Psi^*(\rho)}{\rho} u}  \bigg)
  \end{align*}
  for appropriately large $D$.
\end{proof}
\begin{proof}[Proof of Theorem \ref{thm: tu-ld}]
First we prove \eqref{eq: ren43}. Take $\Theta(u) = n_u-\lfloor n_u\rfloor$. Then by Proposition \ref{prop: cz1}

\begin{align*}
\P\big[ \tau_u = \lfloor n_u\rfloor \big] &=  \P\big[ \tau_{\lfloor n_u\rfloor \rho + \Theta(u)\rho}  = \lfloor n_u\rfloor  \big]\\
 &\sim \frac{ C e^{-\a \Theta(u)\rho} e^{ {\Psi^*(\rho)} \lfloor n_u\rfloor}}{\sqrt{\lfloor n_u\rfloor \rho}}\\
 &\sim  \frac{C \psi(\a)^{-\Theta(u)} } {\sqrt u} {  e^{- \frac{\Psi^*(\rho)}{\rho} u}}\\
\end{align*}

Now, Lemma \ref{lem: ren45} and Proposition \ref{prop: cz1} entail
\begin{align*}
\P[\tau_u < n_u] &\sim \P\big[ n_u - D\log n_u < \tau_u < n_u  \big]\\
&= \sum_{j=0}^{D\log n_u} \P\big[ \tau_u = \lfloor n_u \rfloor - j  \big]\\
&= \sum_{j=0}^{D\log n_u} \P\big[ \tau_{(\lfloor n_u \rfloor - j)\rho + (\Theta(u)+j)\rho} = \lfloor n_u \rfloor - j \big]\\
&\sim  \sum_{j=0}^{D\log n_u} C(\rho) e^{-\a (\Theta(u) +  j) \rho}
\frac{e^{- {\Psi^*(\rho)} (\lfloor n_u \rfloor - j)}}{\sqrt{\rho(\lfloor n_u \rfloor - j)}}\\
&\sim \frac{ C(\rho) \psi(\a)^{-\Theta(u)}} {\sqrt u} {e^{- \frac{\Psi^*(\rho)}{\rho} u}}
\sum_{j=0}^{D\log n_u} \psi(\a)^{-j}\\
&\sim  \frac{\psi(\a)C(\rho) }{\psi(\a)-1}  \psi(\a)^{-\Theta(u)} \frac{1}{\sqrt u} {e^{- \frac{\Psi^*(\rho)}{\rho} u}},
\end{align*}
which proves \eqref{eq: ren41}. Analogous arguments can be used to prove \eqref{eq: ren42}. We omit details
\end{proof}

\end{document}